\newcommand\shorttitle{Convergence and explicit formula for joint moments of CJ$\beta$E characteristic polynomial}
\newcommand\authors{Theodoros Assiotis, Mustafa Alper Gunes and Arun Soor}
\ifodd\value{page}
\authors
\shorttitle
\newtheorem{thm}{Theorem}[section]
\newtheorem{cor}[thm]{Corollary}
\newtheorem{lem}[thm]{Lemma}
\newtheorem{defn}[thm]{Definition}
\newtheorem{rmk}[thm]{Remark}
\newtheorem{prop}[thm]{Proposition}
\newtheorem*{claim*}{Claim}
\title{\large \bf CONVERGENCE AND AN EXPLICIT FORMULA FOR THE JOINT MOMENTS OF THE CIRCULAR JACOBI $\beta$-ENSEMBLE CHARACTERISTIC POLYNOMIAL}
\author{\small THEODOROS ASSIOTIS, MUSTAFA ALPER GUNES AND ARUN SOOR}
\date{}
\begin{document}

\maketitle

\begin{abstract}
The problem of convergence of the joint moments, which depend on two parameters $s$ and $h$, of the characteristic polynomial of a random Haar-distributed unitary matrix and its derivative, as the matrix size goes to infinity, has been studied for two decades, beginning with the thesis of Hughes \cite{HughesThesis}. Recently, Forrester \cite{forrester2020joint} considered the analogous problem for the Circular $\beta$-Ensemble (C$\beta$E) characteristic polynomial, proved convergence and obtained an explicit combinatorial formula for the limit for integer $s$ and complex $h$. In this paper we consider this problem for a generalisation of the C$\beta$E, the Circular Jacobi $\beta$-ensemble (CJ$\beta\textnormal{E}_\delta$), depending on an additional complex parameter $\delta$ and we prove convergence of the joint moments for general positive real exponents $s$ and $h$. We give a representation for the limit in terms of the moments of a family of real random variables of independent interest. This is done by making use of some general results on consistent probability measures on interlacing arrays. Using these techniques, we also extend Forrester's explicit formula to the case of real $s$ and $\delta$ and integer $h$. Finally, we prove an analogous result for the moments of the logarithmic derivative of the characteristic polynomial of the Laguerre $\beta$-ensemble.
\end{abstract}

\section{Introduction}

Let $\mathbb{U}(N)$ be the group of $N\times N$ unitary matrices and consider $\mathbf{V}\in \mathbb{U}(N)$. Introduce the quantities, where $e^{\textnormal{i} \theta_1},\dots, e^{\textnormal{i}\theta_N}$ are the eigenvalues of $\mathbf{V}$: 
\begin{equation}\label{CharPolyDef}
\begin{aligned}
    \Phi_{\mathbf{V}}(u) := \prod_{j=1}^N\left(1-e^{\textnormal{i}(u-\theta_j)}\right), && \Psi_{\mathbf{V}}(u) := e^{\frac{1}{2}\textnormal{i}\left(N(u+\pi) + \sum_{j=1}^N \theta_j\right)}\Phi_\mathbf{V}(u).
\end{aligned}
\end{equation}
Namely, $\Phi_{\mathbf{V}}$ is the characteristic polynomial of $\mathbf{V}$ and $\Psi_{\mathbf{V}}$ is chosen to satisfy $\left|\Psi_{\mathbf{V}}(u) \right| = |\Phi_\mathbf{V}(u)|$ and $\Psi_\mathbf{V}(u) \in \mathbb{R}$ whenever $u\in \mathbb{R}$. Let $d\mathsf{Haar}$ denote the Haar probability measure on $\mathbb{U}(N)$ and consider the following joint moments, which exist for $-\frac{1}{2}<h<s+\frac{1}{2}$:
\begin{equation}
    F_{N}(s,h) := \int_{\mathbb{U}(N)} \left|\Psi_\mathbf{V}(0)\right|^{2s-2h} \left|\frac{d}{du}\Psi_\mathbf{V}(u)\bigg|_{u=0}\right|^{2h}d\mathsf{Haar}(\mathbf{V}).
\end{equation}

There has been significant interest in the problem of asymptotics of $F_N(s,h)$, as $N\to \infty$, for the past twenty years, beginning with the thesis of Hughes from 2001 \cite{HughesThesis}. See \cite{HughesThesis,ConreyRubensteinSnaith,Dehaye2008,Dehaye2010note,Winn_2012,7authors,Bailey_2019,assiotis2020joint,Distinguished} for a number of different approaches (of analytic, combinatorial or probabilistic nature) to this problem that have been developed through the years. Part of the initial motivation of the thesis of Hughes for studying the asymptotics of these moments comes from a remarkable conjectural connection to the joint moments of Hardy's function from analytic number theory, see \cite{HughesThesis,HallHardysFunction,ConreyHardysFunction}. More recently, a connection to random unitarily invariant infinite Hermitian matrices was understood in \cite{assiotis2020joint}, see also \cite{Distinguished}. Finally, these moments are also closely related to the theory of integrable systems, in particular Painlev\'{e} equations, see \cite{ForresterWitte2006,7authors,Bailey_2019,assiotis2020joint,Distinguished}.

Before continuing, we note that it turns out that the study of these moments is equivalent to the study of the moments of the sum of points of certain determinantal point processes (or equivalently the trace of certain unitarily invariant random Hermitian matrices), see \cite{ForresterBook} for the definitions. Such problems have their own intrinsic interest and have been studied for a long time \cite{ForresterBook}. For example, it has been shown that for special cases of determinantal processes the Laplace transform of the sum of points is connected to integrable systems \cite{ChenIts,BasorChenEhrhardt} (it can be instructive to think of this quantity in analogy to the gap probability of a determinantal process \cite{ForresterBook} as both are given by expectations of very simple multiplicative functionals and in particular as Fredholm determinants).

Now, returning to our problem, it is well-known, see \cite{ForresterBook}, that the distribution of the eigenangles $\boldsymbol{\theta}=(\theta_1,\dots,\theta_N)$ of a Haar distributed matrix from $\mathbb{U}(N)$ is explicit and given by the probability measure on $[0,2\pi]^N$:
\begin{align*}
 \frac{1}{(2\pi)^NN!}\prod_{1\le j< k \le N}\left|e^{\textnormal{i} \theta_j}-e^{\textnormal{i} \theta_k}\right|^2 d\theta_1 \cdots d\theta_N.
\end{align*}
This measure is also called the Circular Unitary Ensemble (CUE). There is a two-parameter generalization (the CUE is the special case $\beta=2$, $\delta=0$) of this measure, called the Circular Jacobi $\beta$-Ensemble, depending on parameters $\beta>0$ and $\Re(\delta)>-\frac{1}{2}$. This is the most natural extension of CUE for which explicit formulae for various quantities and connections to integrable systems exist, see \cite{BourgadeCircular,ForresterBook,ForresterWitteNagoya}. Its special case $\delta=0$ is called the Circular $\beta$-Ensemble ($\mathsf{C}\beta\mathsf{E}_N$) and is arguably the most well-known example of a beta ensemble from random matrix theory, see for example \cite{ForresterBook}. The Circular Jacobi $\beta$-Ensemble is then the following probability measure on $[0,2\pi]^N$ that we denote by $\mathsf{C}\mathsf{J}\beta\mathsf{E}_{N,\delta}$:
\begin{align*}
  \mathsf{C}\mathsf{J}\beta\mathsf{E}_{N,\delta}(d\boldsymbol\theta)=\frac{1}{c_{N,\beta,\delta}}\prod_{1\le j< k \le N}\left|e^{\textnormal{i} \theta_j}-e^{\textnormal{i} \theta_k}\right|^\beta  \prod_{j=1}^N \left(1-e^{-\textnormal{i}\theta_j}\right)^\delta \left(1-e^{\textnormal{i}\theta_j}\right)^{\overline{\delta}} d\theta_1 \cdots d\theta_N, 
\end{align*}
where $c_{N,\beta,\delta}$ is chosen so that this is a probability measure, see \cite{ForresterBook,BourgadeCircular}.
In the distinguished cases $\beta=\{1,2,4\}, \delta=0$ this is the law of the eigenangles of a random matrix that can be constructed by a natural transformation from a Haar distributed unitary matrix, see \cite{ForresterBook}. The cases $\beta=1$ and $\beta=4$ (and $\delta=0$) are called the Circular Orthogonal (COE) and Circular Symplectic Ensembles (CSE) respectively.  Matrix models also exist for all values of $\beta>0$ and $\Re(\delta)>-\frac{1}{2}$, see \cite{KilipNenciu,BourgadeCircular}. Finally, observe that for $\delta\in \frac{\beta}{2}\mathbb{N}$, $\mathsf{C}\mathsf{J}\beta\mathsf{E}_{N,\delta}$ coincides with $\mathsf{C}\beta\mathsf{E}_N$ conditioned to have eigenvalues at $1$ (equivalently eigenangles at $0$). 

Associated to $\boldsymbol{\theta}=(\theta_1,\dots,\theta_N)\in [0,2\pi]^N$ we define $\Phi_{\boldsymbol{\theta}}$ and $\Psi_{\boldsymbol{\theta}}$ as in (\ref{CharPolyDef}) and denote the expectation with respect to $\mathsf{C}\mathsf{J}\beta\mathsf{E}_{N,\delta}$ by $\mathbf{E}_{N,\beta,\delta}$. It is then natural to consider the joint moments corresponding to the $\mathsf{C}\mathsf{J}\beta\mathsf{E}_{N,\delta}$:
\begin{equation}
    F_{N,\beta,\delta}(s,h) := \mathbf{E}_{N,\beta,\delta}\left[ \left|\Psi_{\boldsymbol \theta}(0)\right|^{2s-2h} \left|\frac{d}{du}\Psi_{\boldsymbol \theta}(u)\bigg|_{u=0}\right|^{2h}\right].
\end{equation}
We note that these moments exist whenever $\beta>0$,  $\Re(\delta)>-\frac{1}{2}$ and $-\frac{1}{2}< h < \Re(\delta)+s+\frac{1}{2}$. The problem of the large $N$ asymptotics of $F_{N,\beta,0}(s,h)$ for $\beta\neq 2$, namely for $\mathsf{C}\beta\mathsf{E}_N$, was first considered in a recent paper by Forrester \cite{forrester2020joint}. As we will indicate at several places in the sequel, when one leaves the world of random unitary matrices ($\beta=2$, $\delta=0$) several structures break down. Nevertheless, the author in \cite{forrester2020joint} was able, using explicit computations with generalised hypergeometric functions, to prove convergence of the rescaled joint moments $F_{N,\beta,0}(s,h)$ and obtain an explicit combinatorial expression for the limit (that we recall in Theorem \ref{thm:forresterformula} below) for integer $s$ and real\footnote{In fact, as long as $s$ is an integer also complex $h$ can be considered, see \cite{forrester2020joint}.} $h$.

In this paper, using a different approach, exploiting results on consistent probability measures on interlacing arrays, we prove convergence for general complex $\delta \neq 0$ and positive real exponents $s$ and $h$ and give a probabilistic representation for the limit. Using the techniques presented in the sequel we also extend\footnote{We note that we do not require any new explicit computation to do this.} \footnote{In a revised version of the manuscript \cite{forrester2020joint} that was posted on the arXiv a week before the present paper appeared Forrester also obtains an explicit formula for $\delta=0$ (namely for the $\mathsf{C}\beta\mathsf{E}_N$, but this can be extended to real $\delta$) and real $s$ and integer $h$. The two proofs were obtained independently and are different from each other, see Remark \ref{ForresterRemark} for more details.}  Forrester's explicit formula for the limit to real $s$ and $\delta$ and integer $h$. Our more general goal in this paper is to provide a framework for the study of moments of the sum of points in rows of consistent random interlacing arrays, see Section \ref{AbstractConvSection}. Then, we apply (by virtue of Proposition \ref{ForresterProp}) this general theory to the problem of the joint moments of $\mathsf{C}\mathsf{J}\beta\mathsf{E}_{N,\delta}$ characteristic polynomials to obtain our main result, Theorem \ref{MainResult} below. Moreover, we also find an application in the study of the moments of the logarithmic derivative of the characteristic polynomial of the Laguerre $\beta$-ensemble, see Section \ref{LaguerreSection}.

To state our main result we require some notation. In what follows, $\mathbb{Y}$ denotes the set of all integer partitions (Young diagrams). Given $\kappa = (\kappa_1,\kappa_2,\kappa_3,\ldots) \in \mathbb{Y}$, we write $|\kappa| := \kappa_1+\kappa_2 +\kappa_3+ \cdots$; recall also that $(x)_k:=\prod_{j=0}^{k-1}(x+j)$ is the Pochhammer symbol, and $[x]_\kappa^{(\alpha)} := \prod_j \left(x - \frac{1}{\alpha}(j-1)\right)_{\kappa_j}$ is the generalised Pochhammer symbol associated to the partition $\kappa$. Given a box $\Box \in \kappa$, we let $\alpha(\Box)$ denote the arm length (the number of boxes to the right of $\Box$) and $\ell(\Box)$ denote the leg length (the number of boxes below $\Box$). We also define the co-arm length $\alpha^\prime(\Box)$ as the number of boxes to the left of $\Box$, and the co-leg length $\ell^\prime(\Box)$ as the number of boxes above $\Box$. For example, 
\begin{equation*}
\kappa = (4,2,1)\text{ corresponds to the Young diagram }\Yvcentermath1 \Yboxdim{8pt} \yng(4,2,1)\text{, with }|\kappa|=7\text{,} 
\end{equation*}
and the arm lengths, leg-lengths, co-arm lengths and co-leg lengths are given as follows:
\begin{center}
\Yvcentermath0\Yboxdim{13pt}
\begin{tabular}{c  c  c  c}
     $ \young(3210,10,0)$& $\young(2100,10,0)$ & $\young(0123,01,0)$ & $\young(0000,11,2)$\\
     & & & \\
     Arm lengths $\alpha(\Box)$& Leg lengths $\ell(\Box)$ & Co-arm lengths $\alpha^\prime(\Box)$ & Co-leg lengths $\ell^\prime(\Box)$.
\end{tabular}
\end{center}
Following \cite{dalborgo2019}, we also introduce the function:
\begin{multline}
    \Upsilon_\beta(z) := \frac{\beta}{2}\log G\left(1+\frac{2z}{\beta}\right)-\left(z-\frac{1}{2}\right)\log \Gamma\left(1+\frac{2z}{\beta}\right) \\ +\int_0^\infty\left(\frac{1}{2x}-\frac{1}{x^2}+\frac{1}{x(e^x-1)}\right)\frac{e^{-xz}-1}{e^{x\beta/2}-1}dx + \frac{z^2}{\beta}+\frac{z}{2},
\end{multline}
where $G(z)$ denotes the Barnes $G$-function. We note that for special values of $\beta$ and $z$, $\Upsilon_\beta(z)$ takes an even more explicit form, see for example Lemma 7.1 in \cite{dalborgo2019}. Finally, the standard notation $\mathbb{E}\left[\mathsf{Z}\right]$ denotes the expectation of a random variable $\mathsf{Z}$ (unless we have introduced specific notation for the underlying probability measure in which case we use that instead). Our main result is:
\begin{thm}\label{MainResult}\label{prop:realbeta}
Let $\beta>0$. Let $s,h \in \mathbb{R}$ and $\delta \in \mathbb{C}$ be such that $\Re(\delta)>-\frac{1}{3}$, $s>-\frac{1}{3}$, $s+\Re(\delta)>0$ and $0\le h <s+\Re(\delta)+\frac{1}{2}$. Then, there exists a family of real random variables $\left\{\mathsf{X}_{\beta}(\tau)\right\}_{\tau\in \mathbb{C},\Re(\tau)>0}$ such that the following limit exists:
\begin{align} \label{eq:convergencethm}
    \lim\limits_{N \to \infty} \frac{1}{N^{2s^2/\beta + 2h}}F_{N,\beta,\delta}(s,h)\overset{\textnormal{def}}{=}{F}_{\beta,\delta}(s,h)={F}_{\beta,\delta}(s,0) 2^{-2h} \mathbb{E}\left[\left|\mathsf{X}_\beta(s+\delta)\right|^{2h}\right]<\infty,
\end{align}
where $F_{\beta,\delta}(s,0)$ is given explicitly as:
\begin{equation} \label{eq:upsilonformula}
F_{\beta,\delta}(s,0) = e^{2s\frac{\delta+\overline{\delta}}{\beta}+\Upsilon_\beta\left(1+\delta-\beta/2\right) - \Upsilon_\beta \left(1+\delta+\frac{s}{2}-\beta/2\right)+ \Upsilon_\beta\left(1+\overline{\delta}-\frac{\beta}{2}\right) - \Upsilon_\beta \left(1+\delta+\overline{\delta}-\frac{\beta}{2}\right) - \Upsilon_\beta \left(1+\overline{\delta}+\frac{s}{2}-\beta/2\right)+ \Upsilon_\beta\left(1+\delta+\overline{\delta}+s-\beta/2\right)}.
\end{equation}
When $\tau$ is real $\mathsf{X}_{\beta}(\tau)$ is symmetric about the origin in law. Moreover, for $h \in \mathbb{N} \cup \{0\}$ and $\tau>h-\frac{1}{2}$ we have the explicit formula:
\begin{align} \label{eq:explicitmoments}
  \mathbb{E}\left[\mathsf{X}_\beta(\tau)^{2h}\right]= (-1)^h \sum_{|\kappa|\le 2h}\frac{(-2h)_{|\kappa|} 2^{|\kappa|}}{[4\tau/\beta]_\kappa^{(\beta/2)}}\prod_{\Box \in \kappa}\frac{\frac{\beta}{2} \alpha^\prime(\Box) + \tau - \ell^\prime(\Box)}{\left(\frac{\beta}{2}(\alpha(\Box)+1)+ \ell(\Box)\right)\left(\frac{\beta}{2}\alpha(\Box)+ \ell(\Box) + 1\right)}.
\end{align}
\end{thm}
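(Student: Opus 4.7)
The plan is to reduce the joint moment $F_{N,\beta,\delta}(s,h)$ to a single-variable moment of a trace-like functional by a density shift, and then analyse that functional under the shifted ensemble via the consistent interlacing array machinery of Section \ref{AbstractConvSection}.

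First, since $\Psi_{\boldsymbol\theta}$ is real on the real axis, a direct computation of its logarithmic derivative at $u=0$ yields $\Psi'_{\boldsymbol\theta}(0)/\Psi_{\boldsymbol\theta}(0) = -\frac{1}{2}\sum_{j=1}^{N}\cot(\theta_j/2)=:-S_N/2$, hence
\begin{equation*}
|\Psi_{\boldsymbol\theta}(0)|^{2s-2h}\,|\Psi'_{\boldsymbol\theta}(0)|^{2h} = 2^{-2h}\,|\Psi_{\boldsymbol\theta}(0)|^{2s}\,|S_N|^{2h}.
\end{equation*}
Multiplying the CJ$\beta$E$_{N,\delta}$ density by $|\Psi_{\boldsymbol\theta}(0)|^{2s}=\prod_j(1-e^{-\textnormal{i}\theta_j})^{s}(1-e^{\textnormal{i}\theta_j})^{s}$ produces, after renormalization by $F_{N,\beta,\delta}(s,0)$, exactly the CJ$\beta$E$_{N,\delta+s}$ density (here $s$ is real, so $\overline{\delta+s}=\bar\delta+s$). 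Therefore
\begin{equation*}
F_{N,\beta,\delta}(s,h) = F_{N,\beta,\delta}(s,0)\cdot 2^{-2h}\,\mathbf{E}_{N,\beta,\delta+s}\!\bigl[|S_N|^{2h}\bigr].
\end{equation*}

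Next, the asymptotics of $F_{N,\beta,\delta}(s,0)$, which is a ratio of Morris/Selberg normalisation constants, can be extracted from the standard Barnes $G$-function expansion (essentially as in \cite{dalborgo2019}), yielding the $N^{2s^2/\beta}$ scale and the $\Upsilon_\beta$ prefactor (\ref{eq:upsilonformula}). In parallel, I would apply Proposition \ref{ForresterProp}: the measures CJ$\beta$E$_{N,\tau}$ with $\tau=\delta+s$ embed coherently into an interlacing array, so the normalised trace $\tfrac{1}{N}S_N$ has a distributional limit, which we define to be $\mathsf{X}_\beta(\tau)$. Combined with the abstract upper moment bounds furnished by that framework (valid precisely in the range $h<\Re(\tau)+\tfrac12$ matching the hypothesis), distributional convergence upgrades to convergence of $2h$-th moments, giving the formula (\ref{eq:convergencethm}). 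The symmetry $\mathsf{X}_\beta(\tau)\stackrel{d}{=}-\mathsf{X}_\beta(\tau)$ for real $\tau$ is then immediate from the $\boldsymbol\theta\mapsto-\boldsymbol\theta$ invariance of CJ$\beta$E$_{N,\tau}$ when $\tau\in\mathbb{R}$.

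Finally, for integer $h\ge 0$ and real $\tau>h-\tfrac12$, I would derive the explicit formula (\ref{eq:explicitmoments}) by expanding $S_N^{2h}=(\sum_j\cot(\theta_j/2))^{2h}$ in power sums, then in Jack polynomials $P_\kappa^{(\beta/2)}$; the CJ$\beta$E$_{N,\tau}$ expectations of Jack polynomials are available in closed form via Kaneko/Kadell ${}_2F_1^{(\beta/2)}$ integrals, giving a generalised hypergeometric series in $N$. Taking $N\to\infty$ termwise, the shape $[N]_\kappa/N^{|\kappa|}\to 1$ collapses the series to the finite sum over $|\kappa|\le 2h$ displayed, with the arm/leg/co-arm/co-leg factors arising from the standard product formula for the principal specialisation of Jack functions.

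I expect the main obstacle to be the convergence-of-moments step: although distributional convergence of $S_N/N$ is a direct output of the interlacing-array setup, uniform integrability throughout the sharp range $0\le h<s+\Re(\delta)+\tfrac12$ forces a careful treatment of the singularity of $\cot(\theta/2)$ against the weight $|1-e^{-\textnormal{i}\theta}|^{2\Re(\delta+s)}$ at $\theta=0$, and it is here that the hypothesis $\Re(\delta)>-\tfrac13$, $s>-\tfrac13$, $s+\Re(\delta)>0$ must be exploited to obtain a majorant that survives the $N\to\infty$ limit. A secondary, more technical difficulty is the termwise justification of the $N\to\infty$ passage in the Jack-polynomial expansion, which requires a dominated convergence argument over partitions $\kappa$ with $|\kappa|\le 2h$.
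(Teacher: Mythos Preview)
Your reduction via the logarithmic-derivative identity and the density shift to $\tau=s+\delta$ is correct and matches the paper (Proposition~\ref{ForresterProp}). However, two points deserve comment.

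\textbf{Convergence of moments.} You identify uniform integrability of $(|S_N/N|^{2h})_{N\ge 1}$ as the main obstacle and suggest the hypotheses $\Re(\delta)>-\tfrac13$, $s>-\tfrac13$ are what make it work. This is not how the paper proceeds, and those hypotheses play no role here: they enter only in the $h=0$ asymptotics (Proposition~\ref{thm:h=0convergent}). The paper first passes, via $x_j=\cot(\theta_j/2)$, to the Hua--Pickrell measures $\mathfrak{m}_{N,\beta}^{(\tau)}$, which are consistent for the interlacing-array kernels (Lemma~\ref{HPconsistency}); it is not the CJ$\beta$E measures themselves that sit in the array. The key observation is then that the \emph{diagonal entries} of the array are exchangeable (Proposition~\ref{DiagExchProp}), so the row averages $\mathsf{T}_N=N^{-1}\sum_i x_i^{(N)}$ form a backward martingale. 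The backward martingale convergence theorem gives $L^r$-convergence immediately from the single one-dimensional check $\mathbb{E}_{1,\beta}^{(\tau)}[|x_1^{(1)}|^r]<\infty$ for some $r\ge 2h$ (Proposition~\ref{TraceMomentsConv}); no separate uniform-integrability argument is needed. This is precisely why the condition $s+\Re(\delta)>0$ (i.e.\ $\Re(\tau)>0$, so that one can take $r\ge 1$) appears.

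\textbf{Explicit formula.} Your proposed route---expand $S_N^{2h}$ in Jack polynomials, evaluate via Kaneko/Kadell, pass to the limit termwise---is essentially Forrester's approach in the revised version of \cite{forrester2020joint} (cf.\ Remark~\ref{ForresterRemark}), and it does work, but it requires carrying out the hypergeometric computation for real $\tau$. The paper takes a genuinely different route that avoids any new explicit calculation: exchangeability yields the finite identity
\[
\mathbb{E}\bigl[\mathsf{X}_\beta(\tau)^{2h}\bigr]=\frac{1}{(2h)!}\sum_{k=1}^{2h}(-1)^{2h-k}\binom{2h}{k}\,\mathbb{E}_{k,\beta}^{(\tau)}\Bigl[\bigl(x_1^{(k)}+\cdots+x_k^{(k)}\bigr)^{2h}\Bigr]
\]
(Proposition~\ref{prop:sumoverfiniteN}), a sum over \emph{fixed} small ensembles. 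One then matches this to Forrester's formula (\ref{eq:forresterformula}) at integer $\tau$, shows both sides are rational in $\tau$ when $\beta\in 2\mathbb{N}$ (Lemma~\ref{lem:rationalins}) to extend to real $\tau$, and finally invokes Carlson's theorem in the variable $\beta$ (Lemma~\ref{lem:betaestimates}) to reach all $\beta>0$. The payoff is that no termwise-limit justification over partitions is needed, and Forrester's integer-$s$ result is used as a black box.
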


\begin{cor}
For $\beta>0,\delta>-\frac{1}{3}$, $s>-\frac{1}{3}$, $s+\delta>\frac{1}{2}$ and $0\le h <s+\delta+\frac{1}{2}$ we have $F_{\beta,\delta}(s,h)>0$.
\end{cor}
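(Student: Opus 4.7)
The plan is to exploit the product decomposition furnished by Theorem \ref{MainResult}, namely
\begin{equation*}
F_{\beta,\delta}(s,h) \;=\; F_{\beta,\delta}(s,0)\cdot 2^{-2h}\cdot \mathbb{E}\!\left[|\mathsf{X}_\beta(s+\delta)|^{2h}\right],
\end{equation*}
so the corollary reduces to proving strict positivity of each of the two non-trivial factors.

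First I would handle $F_{\beta,\delta}(s,0)$ using the explicit formula \eqref{eq:upsilonformula}: since $\delta \in \mathbb{R}$, every argument of $\Upsilon_\beta$ occurring on the right-hand side is real, and under the corollary's hypotheses ($\delta > -\tfrac13$, $s > -\tfrac13$, $s+\delta > \tfrac12$) each such argument lies in the range where $\Upsilon_\beta$ is real-valued. One just checks that each of $1+\delta-\beta/2$, $1+\delta+s/2-\beta/2$, $1+2\delta-\beta/2$, and $1+2\delta+s-\beta/2$ exceeds $-\beta/2$, so that the Gamma and Barnes-$G$ factors in the definition of $\Upsilon_\beta$ are real and the defining integral converges to a real number. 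Hence $F_{\beta,\delta}(s,0) = \exp(\text{real}) > 0$.

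Next I would treat $\mathbb{E}\!\left[|\mathsf{X}_\beta(s+\delta)|^{2h}\right]$. For $h=0$ this is trivially $1$. For $h>0$ the factor is non-negative, and strictly positive provided $\mathsf{X}_\beta(s+\delta)$ is not almost surely zero, because in that case $\mathbb{P}(|\mathsf{X}_\beta(s+\delta)|>\epsilon)>0$ for some $\epsilon>0$ and then $\mathbb{E}\!\left[|\mathsf{X}_\beta(s+\delta)|^{2h}\right]\geq \epsilon^{2h}\mathbb{P}(|\mathsf{X}_\beta(s+\delta)|>\epsilon)>0$. To rule out degeneracy I would evaluate the second moment using the explicit formula \eqref{eq:explicitmoments} with $h=1$, which is applicable precisely because the hypothesis $s+\delta>\tfrac12$ meets the requirement $\tau > h - \tfrac12$. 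Summing the four terms coming from $\kappa \in \{\emptyset,(1),(2),(1,1)\}$ and clearing denominators, I expect the identity
\begin{equation*}
\mathbb{E}\!\left[\mathsf{X}_\beta(\tau)^2\right] \;=\; \frac{\beta}{(\beta+4\tau)(2\tau-1)}
\end{equation*}
to drop out, which is manifestly strictly positive for $\beta>0$ and $\tau=s+\delta>\tfrac12$.

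The only non-routine step is the algebraic collapse of the four-partition sum into the clean closed form above; the cancellations involve the factor $\beta+2$ appearing in the three non-empty contributions, and the denominator $2\tau-1$ is precisely what makes the threshold $s+\delta>\tfrac12$ in the hypotheses unavoidable with this approach (a weaker threshold would require a different argument to exclude $\mathsf{X}_\beta(s+\delta)\equiv 0$). Combining the two positivity statements with the harmless factor $2^{-2h}$ then yields $F_{\beta,\delta}(s,h)>0$.
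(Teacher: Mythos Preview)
Your proposal is correct and follows essentially the same route as the paper: both use the factorisation from Theorem~\ref{MainResult}, observe that $F_{\beta,\delta}(s,0)>0$ from the exponential form \eqref{eq:upsilonformula}, and then evaluate \eqref{eq:explicitmoments} at $h=1$ to obtain $\mathbb{E}[\mathsf{X}_\beta(\tau)^2]=\beta/\bigl((2\tau-1)(4\tau+\beta)\bigr)$, whence $\mathsf{X}_\beta(\tau)$ is not almost surely zero. The only difference is that you spell out the reality/positivity check for $F_{\beta,\delta}(s,0)$ in more detail than the paper does.
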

\begin{proof}
For $\tau>\frac{1}{2}$, evaluating \eqref{eq:explicitmoments} at $h=1$ gives, after further algebraic simplification, the following formula for the $2$nd moment of $\mathsf{X}_\beta (\tau)$:
\begin{equation*}
    \mathbb{E}\left[\mathsf{X}_\beta(\tau)^{2}\right]=\frac{\beta}{(2\tau-1)(4\tau+\beta)},
\end{equation*}
which does not vanish for any $\beta>0, \tau>\frac{1}{2}$. In particular, for these values of $\tau$ and $\beta$ the random variable $\mathsf{X}_\beta(\tau)$ is not almost surely zero. Combining this with \eqref{eq:convergencethm} gives the desired result.
\end{proof}

We expect the convergence statement in Theorem \ref{MainResult} to extend to the full range of parameters for which the joint moments exist but this would require some new ideas to establish. In fact, for $\beta=2$ and $\delta=0$ it is possible to go slightly beyond the parameter range for $s$ in Theorem \ref{MainResult} above, see \cite{assiotis2020joint}. However, the proof in \cite{assiotis2020joint} uses in an essential way the underlying determinantal point process structure which is absent for $\beta\neq 2$. Finally, there are a few results that have been proven for $\mathsf{X}_2(\tau)$ in the literature, see Remarks \ref{RmkPrincipalValue}, \ref{RemarkExplicit} and \ref{RemarkIntegrable} for more details, but the proofs of all of these again rely heavily on the determinantal point process structure and so they do not easily generalise to $\mathsf{X}_\beta(\tau)$.
 
\begin{rmk}\label{ForresterRemark}
In a revised version of the manuscript \cite{forrester2020joint}, posted on arXiv a week before the present paper first appeared, Forrester proves an explicit formula which is essentially equivalent to (\ref{eq:explicitmoments}). The two proofs were obtained independently and are different. In the initial version of \cite{forrester2020joint} the explicit formula for $F_{\beta,0}(s,h)$  is only obtained for integer $s$. In the revised version of \cite{forrester2020joint} additional explicit computations are performed that give a combinatorial formula for $F_{N,\beta,0}(s,h)/F_{N,\beta,0}(s,0)$ for real $s$ and integer $h$, and then the large $N$ limit is taken. The main point of our proof of (\ref{eq:explicitmoments}) is that the extension to real $s$ can be done already in the limit, using only the explicit formula of Forrester for $F_{\beta,0}(s,h)$ for integer $s$ and $h$, and further explicit computations are not required. The approach that we take is not evident and we use some general results developed in Section \ref{AbstractConvSection} in order to do this.
\end{rmk}

\begin{rmk}
For $h=0$ and general $\beta,s>0,\Re(\delta)>-\frac{1}{3}$ (in this case $F_{N,\beta,\delta}(s,0)$ is completely explicit using the Selberg integral, see \cite{KeatingSnaith}) a proof of the asymptotics was first given, as far as we are aware, in \cite{dalborgo2019}, see Proposition \ref{thm:h=0convergent} below. The asymptotics in the case $\delta=0$ and $s\in \mathbb{N}\cup \{0\}$ go back earlier and were first considered in \cite{KeatingSnaith}.
\end{rmk}

\begin{rmk}\label{RmkPrincipalValue}
For $\beta=2$ and real $\tau$ the random variable $\mathsf{X}_2(\tau)$ has a representation in terms of the principal value sum of points of a determinantal point process as proven by Qiu \cite{Qiu}. For general $\beta>0$ we expect an analogous representation in terms of the principal value sum of the eigenvalues of a certain stochastic operator, see \cite{Valko_Virag,Li_Valko}. Although, as far as we are aware, this has not been worked out explicitly, it might be possible to obtain using the techniques of \cite{Valko_Virag,Li_Valko}.
\end{rmk}

\begin{rmk}\label{RemarkExplicit}
We expect that for general $\tau$, the random variable $\mathsf{X}_\beta(\tau)$ is not almost surely zero, so that in particular $F_{\beta,\delta}(s,h)>0$. Establishing this for the full range of parameters turns out to be tricky, even in the case $\beta=2$ and real $\tau$ which was proven in \cite{assiotis2020joint}. This was done using the representation mentioned in Remark \ref{RmkPrincipalValue} above. In fact, we expect a much stronger result: that the law of $\mathsf{X}_\beta(\tau)$ has a density with respect to the Lebesgue measure. Remarkably, when $\tau\in \mathbb{N}\cup \left\{0\right\}$ this density is completely explicit. This was first obtained in the case $\beta=2$ in \cite{Distinguished} and for general $\beta>0$, by a different method, in \cite{forrester2020joint}.
\end{rmk}

\begin{rmk}\label{RemarkIntegrable}
For $\beta=2$ and any real $\tau>-\frac{1}{2}$, the characteristic function $t\mapsto \mathbb{E}\left[e^{\textnormal{i}\frac{t}{2}\mathsf{X}_2(\tau)}\right]$ is a tau- function of a special case of the $\sigma$-Painlev\'e III' equation, which depends on the parameter $\tau$, see \cite{Distinguished} for more details. It is not clear whether such connections to integrable systems extend beyond $\beta=2$ (even the simpler determinantal case of general complex $\tau$ for $\beta=2$ is still open). It would be interesting to investigate this.
\end{rmk}

Let us say a word about the strategy of proof. Our starting point is an observation alluded to earlier, that was first made in \cite{assiotis2020joint}, and also used in \cite{Distinguished}, in the setting of $\beta=2$ and $\delta=0$, which connects $F_{N,\beta,\delta}(s,h)$ to the moments of the trace of the Hermitian Hua-Pickrell matrix ensemble (also known as Cauchy ensemble, see \cite{HuaBook,Pickrell,Neretin_2002,Borodin_Olshanski,ForresterBook,BourgadeCircular,ForresterWitte2000}). An analogous connection also exists for general $\beta>0$ and $\delta=0$, as established in \cite{forrester2020joint}. The further extension to $\delta\neq 0$ is straightforward and we present it in Section \ref{SectionPreliminaries}. Modulo this common start, our approach is completely different from the one of Forrester in \cite{forrester2020joint}. It is probabilistic in nature and makes heavy use of some hidden exchangeable structure, a feature shared with the approach in \cite{assiotis2020joint}. A key ingredient in \cite{assiotis2020joint} is the fact that one can correctly define a unitarily invariant Hua-Pickrell measure on infinite Hermitian matrices. Here instead we make use of some general results from \cite{assiotis2020boundary} on consistent (this will be made precise in the sequel) distributions, depending on a parameter $\beta$, on infinite interlacing arrays and further develop a little theory using some arguments based on exchangeability, see Section \ref{InterlacingConvSection}. Although the appearance of random infinite interlacing arrays might seem slightly unmotivated at first sight, this setting is, in some sense, the natural general $\beta$ analogue of random unitarily invariant infinite Hermitian matrices, see Section \ref{InterlacingConvSection} and Remark \ref{RmkMatrices} in particular for more details.


Finally, using the framework developed here we prove in Section \ref{LaguerreSection} an analogous result to Theorem \ref{MainResult} for the moments of the logarithmic derivative of the characteristic polynomial of the Laguerre $\beta$-ensemble, see Proposition \ref{LaguerreProp}. The random variables appearing in Proposition \ref{LaguerreProp} also appear in the asymptotics of the joint moments of characteristic polynomials from the classical compact groups. This is established, using different methods, in work in preparation by one us \cite{AlperDissertation}  and gives the conjectural asymptotics of joint moments over various L-function families, as the conductor of the family tends to infinity.

\paragraph{Acknowledgements} We are very grateful to two anonymous referees for a very careful reading of the paper and many useful comments and suggestions which have improved the presentation.

\section{Preliminaries}\label{SectionPreliminaries}

We need to introduce some notation and definitions and state some previous results. Define the Weyl chamber, for $N\in \mathbb{N}$, by:
\begin{align*}
    \mathbb{W}_N=\left\{\mathbf{x}=\left(x_1,\ldots,x_N\right)\in \mathbb{R}^N:x_1\ge \ldots \ge x_N \right\}.
\end{align*}
We will make frequent use, throughout the paper, without explicit mention, of the following fact. If $f$ is a symmetric function on $\mathbb{R}^N$ then,
$
 \int_{\mathbb{W}_N}f(\mathbf{x})d\mathbf{x}=\frac{1}{N!}\int_{\mathbb{R}^N}f(\mathbf{x})d\mathbf{x}.$

\begin{defn}\label{DefnHP}
For $\beta > 0$ and $\Re(\tau)>-\frac{1}{2}$, we introduce the probability measure on $\mathbb{W}_N$:
\begin{equation}
     \mathfrak{m}_{N,\beta}^{(\tau)}(d \mathbf{x}) := \frac{N!}{\mathcal{C}_{N,\beta}^{(\tau)}}\prod_{j=1}^N(1+\textnormal{i}x_j)^{-\tau-\beta(N-1)/2-1} (1-\textnormal{i}x_j)^{-\overline{\tau}-\beta(N-1)/2-1}\left|\Delta(\mathbf{x})\right|^\beta d\mathbf{x},
\end{equation}
where $\Delta(\mathbf{x}) := \prod_{1 \le i < j \le N} (x_i-x_j)$ is the Vandermonde determinant, and $\mathcal{C}_{N,\beta}^{(\tau)}$ is the normalisation constant, see \cite{ForresterBook}. We will need its explicit value only for real values of $\tau$, in which case it is given by, see \cite{ForresterBook}:
\begin{equation}
    \mathcal{C}_{N,\beta}^{(\tau)} = 2^{-\beta N(N-1)/2 -2N\tau}\pi^N \prod_{j=0}^{N-1}\frac{\Gamma\left(\frac{\beta}{2}j + 2\tau + 1\right)\Gamma\left(\frac{\beta}{2}(j+1)+1\right)}{\Gamma\left(\frac{\beta}{2}j+\tau+1\right)^2 \Gamma\left(\frac{\beta}{2}+1\right)}. \label{eq:normconst}
\end{equation}
\end{defn}

These are known as the general-$\beta$ Hua-Pickrell measures (also known as the Cauchy $\beta$-ensemble, see \cite{ForresterBook}). Throughout this paper we use $\mathbb{E}_{N,\beta}^{(\tau)}$ to denote expectations taken with respect to the measure $\mathfrak{m}_{N,\beta}^{(\tau)}$, and in the context of taking these expectations $(x_1^{(N)},\ldots,x_N^{(N)})$ denotes a point in $\mathbb{W}_N$ distributed according to $\mathfrak{m}_{N,\beta}^{(\tau)}$.  One of the key propositions in \cite{forrester2020joint} links the joint moments $F_{N,\beta,0}(s,h)$ to moments taken against the Hua-Pickrell measures. Following a similar method, we obtain the following generalization of \cite{forrester2020joint}[Proposition 2.1] to $\delta \neq 0$.

\begin{prop} For all $\beta > 0$, $\Re(\delta)>-\frac{1}{2}$, $s+\Re(\delta)>-\frac{1}{2}$ and $-\frac{1}{2} < h < \Re(\delta)+s + \frac{1}{2}$, we have:
\begin{equation}
    F_{N,\beta,\delta}(s,h) = F_{N,\beta,\delta}(s,0) 2^{-2h} \mathbb{E}_{N,\beta}^{(s+\delta)}\left[\left|x_1^{(N)}+\cdots+x_N^{(N)}\right|^{2h}\right].  
\end{equation}
 \label{ForresterProp}
 \begin{proof}
 We argue as in \cite{forrester2020joint}[Proposition 2.1]. Firstly, observe that:
 \begin{equation*}
     \frac{\Psi_{\boldsymbol \theta}'(0)}{\Psi_{\boldsymbol \theta}(0)}=-\frac{1}{2}\sum_{j=1}^N \cot\left(\frac{\theta_j}{2}\right).
 \end{equation*}
Thus, making the transformation $x_j=\cot\left(\frac{\theta_j}{2}\right)$ gives the equality:
 \begin{equation*}
     F_{N,\beta,\delta}(s,h)= K_{N,\beta,s,\delta} 2^{-2h} \mathbb{E}_{N,\beta}^{(s+\delta)}\left[\left|x_1^{(N)}+\cdots+x_N^{(N)}\right|^{2h}\right]
\end{equation*}
where $K_{N,\beta,s,\delta}$ is a constant that only depends on $N,\beta,s$ and $\delta$. Then, evaluating both sides at $h=0$ gives the statement of the proposition.  
 \end{proof}
\end{prop}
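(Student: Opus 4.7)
My plan is to reduce the joint moment to an integral against the Hua-Pickrell measure via the stereographic substitution $x_j = \cot(\theta_j/2)$, and then identify the prefactor by specialising to $h=0$.

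The first step is to extract a real logarithmic derivative from the $|\Psi'|^{2h}$ factor. Using the factorisation $1-e^{\mathrm{i}(u-\theta_j)} = -2\mathrm{i}\,e^{\mathrm{i}(u-\theta_j)/2}\sin((u-\theta_j)/2)$, one checks that the phase in the definition of $\Psi_{\boldsymbol\theta}$ is calibrated so that $\Psi_{\boldsymbol\theta}(u)$ is, up to a real sign depending only on $N$, equal to $2^N\prod_j\sin((u-\theta_j)/2)$; taking $d/du\log$ and evaluating at $u=0$ gives
$$\frac{\Psi_{\boldsymbol\theta}'(0)}{\Psi_{\boldsymbol\theta}(0)} = -\frac{1}{2}\sum_{j=1}^N\cot(\theta_j/2).$$
Pulling this ratio out of $|\Psi_{\boldsymbol\theta}'(0)|^{2h}$ and combining with the $|\Psi_{\boldsymbol\theta}(0)|^{2s-2h}$ factor yields
$$F_{N,\beta,\delta}(s,h) = 2^{-2h}\,\mathbf{E}_{N,\beta,\delta}\!\left[|\Psi_{\boldsymbol\theta}(0)|^{2s}\,\Bigl|\sum_{j=1}^N\cot(\theta_j/2)\Bigr|^{2h}\right].$$

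The second step is the change of variables $x_j = \cot(\theta_j/2)$, a diffeomorphism $(0,2\pi)\to\mathbb{R}$ satisfying $d\theta_j = -2(1+x_j^2)^{-1}dx_j$ and $\sin(\theta_j/2) = (1+x_j^2)^{-1/2}$. Under it one reads off $|e^{\mathrm{i}\theta_j}-e^{\mathrm{i}\theta_k}| = 2|x_j-x_k|/\sqrt{(1+x_j^2)(1+x_k^2)}$ and $|\Psi_{\boldsymbol\theta}(0)|^2 = \prod_j 4(1+x_j^2)^{-1}$. Using the identity $1+\mathrm{i}x_j = \mathrm{i}e^{-\mathrm{i}\theta_j/2}/\sin(\theta_j/2)$ (and the conjugate identity for $1-\mathrm{i}x_j$), and working with principal branches throughout, the complex CJ$\beta$E weight $(1-e^{-\mathrm{i}\theta_j})^\delta(1-e^{\mathrm{i}\theta_j})^{\bar\delta}$ — combined with the $|\Psi_{\boldsymbol\theta}(0)|^{2s}$ factor, the Vandermonde, and the Jacobian — transforms, up to a multiplicative prefactor depending only on $(N,\beta,s,\delta)$, into the Hua-Pickrell weight of Definition \ref{DefnHP} with $\tau=s+\delta$. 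Collecting everything gives
$$F_{N,\beta,\delta}(s,h) = K_{N,\beta,s,\delta}\cdot 2^{-2h}\,\mathbb{E}_{N,\beta}^{(s+\delta)}\!\left[\bigl|x_1^{(N)}+\cdots+x_N^{(N)}\bigr|^{2h}\right]$$
where $K_{N,\beta,s,\delta}$ is manifestly independent of $h$.

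The third step is immediate: evaluating both sides at $h=0$ forces $K_{N,\beta,s,\delta}=F_{N,\beta,\delta}(s,0)$, completing the proof. The main obstacle I anticipate is the phase bookkeeping in the second step, since the CJ$\beta$E and Hua-Pickrell weights each carry non-trivial $\theta_j$-dependent phases of the shape $e^{\pm\mathrm{Im}(\delta)(\pi-\theta_j)}$ coming from complex exponents, and one must verify these match (possibly modulo the symmetry $\mathbf{x}\mapsto -\mathbf{x}$, which exchanges $\mathfrak{m}_{N,\beta}^{(\tau)}$ with $\mathfrak{m}_{N,\beta}^{(\bar\tau)}$ but leaves $|x_1+\cdots+x_N|^{2h}$ invariant, so that the choice between $\tau = s+\delta$ and $\tau = s+\bar\delta$ is immaterial for the moments in question). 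The clean way to bypass this bookkeeping altogether is precisely the third step: the explicit value of $K_{N,\beta,s,\delta}$ is never needed, it is simply read off from the $h=0$ specialisation.
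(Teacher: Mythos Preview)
Your proposal is correct and follows essentially the same approach as the paper's proof: compute the logarithmic derivative $\Psi_{\boldsymbol\theta}'(0)/\Psi_{\boldsymbol\theta}(0)=-\tfrac{1}{2}\sum_j\cot(\theta_j/2)$, apply the stereographic substitution $x_j=\cot(\theta_j/2)$ to land on the Hua-Pickrell measure with parameter $\tau=s+\delta$, and identify the $h$-independent prefactor by setting $h=0$. You have simply supplied more detail on the factorisations and the phase bookkeeping than the paper (which defers to \cite{forrester2020joint}), and your observation that the $h=0$ specialisation makes the explicit tracking of phases unnecessary is exactly the point.
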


We will also use the following corollary of \cite[Lemma 4.14]{dalborgo2019}:
\begin{prop}\cite[Lemma 4.14]{dalborgo2019}\label{thm:h=0convergent}
Let $\beta>0$ and $\Re(\delta)>-\frac{1}{3}$. For $s > -\frac{1}{3}$, we have:
\begin{align*}
    \log F_{N, \beta,\delta }(s,0) =\frac{2s^2}{\beta}\log(N)+2s\frac{\delta+\overline{\delta}}{\beta}+\Upsilon_\beta\left(1+\delta-\beta/2\right) - \Upsilon_\beta \left(1+\delta+\frac{s}{2}-\beta/2\right)\\+ \Upsilon_\beta\left(1+\overline{\delta}-\frac{\beta}{2}\right) - \Upsilon_\beta \left(1+\delta+\overline{\delta}-\frac{\beta}{2}\right) - \Upsilon_\beta \left(1+\overline{\delta}+\frac{s}{2}-\beta/2\right)\\+ \Upsilon_\beta\left(1+\delta+\overline{\delta}+s-\beta/2\right)+ o(1),
\end{align*}
as $N \to \infty$.
\end{prop}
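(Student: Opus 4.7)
The plan is to express $F_{N,\beta,\delta}(s,0)$ as a ratio of Circular Jacobi $\beta$-Ensemble normalization constants, and then to invoke the asymptotic expansion of \cite[Lemma 4.14]{dalborgo2019}. For real $s$ one has
\[
|\Psi_{\boldsymbol\theta}(0)|^{2s}=|\Phi_{\boldsymbol\theta}(0)|^{2s}=\prod_{j=1}^N\bigl(1-e^{-\textnormal{i}\theta_j}\bigr)^s\bigl(1-e^{\textnormal{i}\theta_j}\bigr)^s,
\]
so inserting this factor into the $\mathsf{C}\mathsf{J}\beta\mathsf{E}_{N,\delta}$ integral shifts the weight $(1-e^{-\textnormal{i}\theta_j})^{\delta}(1-e^{\textnormal{i}\theta_j})^{\overline{\delta}}$ to the $\mathsf{C}\mathsf{J}\beta\mathsf{E}_{N,\delta+s}$ weight $(1-e^{-\textnormal{i}\theta_j})^{\delta+s}(1-e^{\textnormal{i}\theta_j})^{\overline{\delta+s}}$. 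Reading off normalization constants yields the identity $F_{N,\beta,\delta}(s,0)=c_{N,\beta,\delta+s}/c_{N,\beta,\delta}$, where $c_{N,\beta,\delta}$ denotes the normalization constant of $\mathsf{C}\mathsf{J}\beta\mathsf{E}_{N,\delta}$.

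By the Morris/Selberg evaluation, each $c_{N,\beta,\delta}$ is an explicit finite product of Gamma functions; \cite[Lemma 4.14]{dalborgo2019} extracts its $N\to\infty$ asymptotic, with the function $\Upsilon_\beta$ entering naturally as the organiser of the Binet-type expansions of $\log\Gamma$ and $\log G$. Forming the difference of the expansions of $\log c_{N,\beta,\delta+s}$ and $\log c_{N,\beta,\delta}$, the quadratic-in-$\delta$ contributions to the $\log N$ coefficient cancel and only the Keating--Snaith leading term $\tfrac{2s^2}{\beta}\log N$ survives; the remaining finite contributions telescope into precisely the combination of six $\Upsilon_\beta$-values and the linear piece $\tfrac{2s(\delta+\overline{\delta})}{\beta}$ displayed in the statement.

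The main technical point requiring attention is that \cite[Lemma 4.14]{dalborgo2019} is most naturally formulated for real parameters, whereas here $\delta$ is complex with $\Re(\delta)>-\tfrac{1}{3}$. I would handle this by analytic continuation: the rescaled ratio $F_{N,\beta,\delta}(s,0)/N^{2s^2/\beta}$ is analytic in $\delta$ on the open half-plane $\{\Re(\delta)>-\tfrac{1}{2}\}$, the candidate limit on the right-hand side is analytic in the same region, and the $o(1)$ error is uniform on compact subsets thanks to the explicit Binet-type integral representation in the definition of $\Upsilon_\beta$. A standard Vitali/Montel argument then extends the real-parameter asymptotic to the full complex half-plane, completing the proof.
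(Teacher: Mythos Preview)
The paper does not supply a proof of this proposition at all: it is stated as a direct citation of \cite[Lemma~4.14]{dalborgo2019}, and the text moves on immediately. Your proposal is therefore not competing with any argument in the paper; rather, you have written out the natural derivation that the authors leave implicit. The identification $F_{N,\beta,\delta}(s,0)=c_{N,\beta,\delta+s}/c_{N,\beta,\delta}$ via the shift $\delta\mapsto\delta+s$ in the CJ$\beta$E weight is correct, and feeding the Morris/Selberg product into the asymptotic of \cite{dalborgo2019} is exactly how the displayed expansion arises. Your remark about extending from real to complex $\delta$ by a Vitali/Montel argument is a reasonable precaution, though in practice \cite{dalborgo2019} already works with complex singularity parameters, so this step may be lighter than you anticipate.
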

Thus, an immediate corollary of Proposition \ref{thm:h=0convergent} is the explicit evaluation \eqref{eq:upsilonformula}. 

Our final main ingredient is the following result of \cite{forrester2020joint}, giving an explicit evaluation of $F_{\beta,0}(s,h)$ for $s \in \mathbb{N} \cup \{0\}$, $h \in \mathbb{C}$, $-\frac{1}{2} < \Re(h) < s + \frac{1}{2}$. We will use this formula to derive \eqref{eq:explicitmoments}; that is, the explicit evaluation for $h \in \mathbb{N} \cup \{0\}$ and $\tau > h - \frac{1}{2}$. 

\begin{thm}\cite[Theorem 1.1]{forrester2020joint}\label{thm:forresterformula}
For $\beta > 0$, $s \in \mathbb{N} \cup \{0\},$ and $-\frac{1}{2} < \Re(h) < s + \frac{1}{2}$, we have the explicit evaluation:
\begin{multline}
        F_{\beta,0}(s,h) = \prod_{j=1}^s \frac{\Gamma(2j/\beta)}{\Gamma(2(s+j)/\beta)} \times \frac{1}{2^{2h} \cos(\pi h)} \\ \times \sum_{\kappa}\frac{(-2h)_{|\kappa|} 2^{|\kappa|}}{[4s/\beta]_\kappa^{(\beta/2)}}\prod_{\Box \in \kappa}\frac{\frac{\beta}{2} \alpha^\prime(\Box) + s - \ell^\prime(\Box)}{\left(\frac{\beta}{2}(\alpha(\Box)+1)+ \ell(\Box)\right)\left(\frac{\beta}{2}\alpha(\Box)+ \ell(\Box) + 1\right)}. \label{eq:forresterformula}
\end{multline}
Here the sum over $\kappa$ ranges over all partitions with at most $s$ parts.
\end{thm}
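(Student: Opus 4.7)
The plan is to start from Proposition \ref{ForresterProp} at $\delta=0$, which reduces the theorem to establishing
\begin{equation*}
\lim_{N\to\infty}\mathbb{E}_{N,\beta}^{(s)}\!\left[|S_N|^{2h}\right] \;=\; \frac{1}{\cos(\pi h)}\sum_{\kappa:\,\ell(\kappa)\le s}\frac{(-2h)_{|\kappa|}2^{|\kappa|}}{[4s/\beta]_\kappa^{(\beta/2)}}\prod_{\Box\in\kappa}\frac{\tfrac{\beta}{2}\alpha^\prime(\Box)+s-\ell^\prime(\Box)}{(\tfrac{\beta}{2}(\alpha(\Box)+1)+\ell(\Box))(\tfrac{\beta}{2}\alpha(\Box)+\ell(\Box)+1)},
\end{equation*}
for $S_N := x_1^{(N)}+\cdots+x_N^{(N)}$ under the Hua--Pickrell measure $\mathfrak{m}_{N,\beta}^{(s)}$ with integer parameter $s$. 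The remaining prefactor $\prod_{j=1}^{s}\Gamma(2j/\beta)/\Gamma(2(s+j)/\beta)$ appearing in \eqref{eq:forresterformula} is $F_{\beta,0}(s,0)$, obtained from the Selberg evaluation of $F_{N,\beta,0}(s,0)$ and its large-$N$ asymptotics in Proposition \ref{thm:h=0convergent}, where at $\delta=0$ and integer $s$ the $\Upsilon_\beta$-differences collapse to exactly this $\Gamma$-product.

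To handle $|S_N|^{2h}$ for non-integer $h$, the next step is to introduce a Mellin--Fourier representation of $|y|^{2h}$ valid in an initial strip (for example, $|y|^{2h}=-\Gamma(-2h)^{-1}\cos(\pi h)^{-1}\int_0^\infty (1-\cos(\lambda y))\lambda^{-2h-1}\,d\lambda$ in the strip $0<\Re(h)<\tfrac12$), swap with the expectation by Fubini, and thereby reduce everything to the characteristic function $\phi_N(\lambda) := \mathbb{E}_{N,\beta}^{(s)}[e^{\textnormal{i}\lambda S_N}]$. Note that the $\cos(\pi h)^{-1}$ factor in \eqref{eq:forresterformula} is already accounted for by this representation. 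For integer $s$ the Hua--Pickrell weight is a polynomial-shifted Cauchy-type density, and Kaneko's (equivalently Kadell's) evaluation of Jack-polynomial Selberg integrals gives an explicit confluent hypergeometric expansion
\begin{equation*}
\phi_N(\lambda) \;=\; \sum_{\kappa:\,\ell(\kappa)\le\min(N,s)} A_\kappa^{(N)}\,(\textnormal{i}\lambda)^{|\kappa|},
\end{equation*}
whose coefficients $A_\kappa^{(N)}$ are built from $[4s/\beta]_\kappa^{(\beta/2)}$ and the same arm/leg/co-arm/co-leg product appearing in \eqref{eq:forresterformula}, and stabilise to $N$-independent limits $A_\kappa$ once $N\ge\ell(\kappa)$ (the remaining $N$-dependence is absorbed by the scalar Jack specialisation $J_\kappa^{(\beta/2)}(1,\ldots,1)$, which is a known polynomial in $N$).

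Inserting the partition expansion into the Mellin--Fourier integral and evaluating each monomial $\lambda^{|\kappa|}$ against $\lambda^{-2h-1}$ via the pair $\int_0^\infty (1-\cos\lambda)\lambda^{\sigma-1}\,d\lambda=-\Gamma(\sigma)\cos(\pi\sigma/2)$ produces, after the $\Gamma$-cancellations with the outer $\Gamma(-2h)$, exactly the Pochhammer factor $(-2h)_{|\kappa|}2^{|\kappa|}$ of \eqref{eq:forresterformula}; the truncation at length $s$ is automatic, since for integer $s$ the co-leg factor $\tfrac{\beta}{2}\alpha^\prime(\Box)+s-\ell^\prime(\Box)$ vanishes whenever $\kappa$ has a row of index exceeding $s$. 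Analytic continuation in $h$ then propagates the identity to the full admissible range $-\tfrac12<\Re(h)<s+\tfrac12$. The hardest part will be the rigorous justification of the two interchanges --- the $N\to\infty$ limit with the infinite partition sum, and the series with the $\lambda$-integral --- which I would handle by dominated convergence: the Jack-polynomial bound $|J_\kappa^{(\beta/2)}(\lambda,\ldots,\lambda)|\le C_\kappa|\lambda|^{|\kappa|}$ combined with the rapid decay of $1/[4s/\beta]_\kappa^{(\beta/2)}$ in $|\kappa|$ gives an integrable dominator near the origin in $\lambda$, while at infinity the Cauchy-type tails of $\mathfrak{m}_{N,\beta}^{(s)}$ provide sufficient cancellation in $(1-\Re\phi_N(\lambda))\lambda^{-2h-1}$ for $N$-uniform integrability.
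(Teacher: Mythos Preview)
The paper does not prove this statement. Theorem \ref{thm:forresterformula} is quoted from Forrester \cite[Theorem 1.1]{forrester2020joint} as an external ingredient; the present paper \emph{uses} it (together with the exchangeability machinery of Section \ref{AbstractConvSection} and Proposition \ref{prop:sumoverfiniteN}) to derive the explicit moment formula \eqref{eq:explicitmoments} for real $\tau$ and integer $h$, but it does not supply its own proof of Forrester's result. So there is no ``paper's own proof'' to compare against here.

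That said, your outline is broadly the route Forrester actually takes in \cite{forrester2020joint}: pass via Proposition \ref{ForresterProp} to the Hua--Pickrell trace moment, expand the characteristic function (equivalently a confluent generalised hypergeometric function ${}_1F_1^{(\beta/2)}$) in Jack polynomials using the Kaneko--Kadell Selberg-type integrals, and extract the $(-2h)_{|\kappa|}$ Pochhammer factor from a Mellin representation of $|y|^{2h}$, with analytic continuation to the full $h$-range at the end. A couple of your intermediate claims are imprecise, however. The coefficients $A_\kappa^{(N)}$ do \emph{not} ``stabilise to $N$-independent limits once $N\ge\ell(\kappa)$'': as you yourself note, the Jack specialisation $J_\kappa^{(\beta/2)}(1^N)$ is a polynomial in $N$ of degree $|\kappa|$, and it is precisely the large-$N$ asymptotics of this factor (against the normalisation) that produces the final form of the coefficients. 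And the justification of the two interchanges is the real content of the argument: ``Cauchy-type tails'' is not by itself a uniform-in-$N$ integrability bound for $(1-\Re\phi_N(\lambda))\lambda^{-2h-1}$ at large $\lambda$, so you would need a quantitative decay estimate on $\phi_N$ (or, as Forrester does, work instead with a finite-$N$ identity and only then pass to the limit). If you want to write out a self-contained proof, those are the two places to tighten.
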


\section{Proof of the convergence statement in Theorem \ref{MainResult}}\label{AbstractConvSection}
\subsection{A moments convergence result for rows of random interlacing arrays}\label{InterlacingConvSection}

In this section we prove some results on the moments of averages of the points in rows of consistent (this will be made precise shortly) random infinite interlacing arrays. As we explain in Remark \ref{RemarkKernel} and Remark \ref{RmkMatrices} below such random arrays, for $\beta=1,2,4$, arise by looking at the eigenvalues of consecutive principal submatrices of an infinite conjugation-invariant random matrix. The definition of a consistent random infinite interlacing array makes sense for all $\beta>0$ however. The main observation behind this section is that instead of studying the sum of eigenvalues of the matrix (namely the sum of points in rows of the array), for $\beta=1,2,4$, it is easier to study the sum of diagonal entries of the matrix  (of course these two sums are equal) because of their exchangeable structure. There is a natural generalisation of the notion of diagonal entries of a matrix to that of diagonal entries corresponding to an infinite interlacing array for any $\beta>0$. These still have an exchangeable structure and this is what makes everything work. A key input to our approach are the results from \cite{assiotis2020boundary} which generalise the theory developed for the case $\beta=2$ in \cite{Olshanski_Vershik} and \cite{Borodin_Olshanski}. The arguments in this section are based on exchangeability and this seems to be the most general setting to which they apply in a random matrix context\footnote{There are many discrete analogues, see for example \cite{GTgraph,Petrov}, of the construction in \cite{assiotis2020boundary}, namely consistent (in a certain sense) random discrete infinite interlacing arrays, and it is plausible that a variation of these arguments can be used there as well. We do not pursue it further.}.

We need some notation and definitions. For $\mathbf{x}\in \mathbb{W}_N$ and $\mathbf{y}\in \mathbb{W}_{N+1}$, we write $\mathbf{x}\prec \mathbf{y}$ if the following inequalities hold:
\begin{align*}
 y_1\ge x_1 \ge y_2 \ge \cdots \ge y_N \ge x_N \ge y_{N+1}.   
\end{align*}

For any $\beta>0$ and $N\ge 1$, consider the following Markov kernel $\mathsf{\Lambda}^{(\beta)}_{N+1,N}$ from $\mathbb{W}_{N+1}$ to $\mathbb{W}_N$ given by the explicit formula, for $\mathbf{y}\in \mathbb{W}^{\circ}_{N+1}$ (the interior of $\mathbb{W}_{N+1}$):
\begin{align*}
  \mathsf{\Lambda}^{(\beta)}_{N+1,N}\left(\mathbf{y},d\mathbf{x}\right) =\frac{\Gamma\left(\frac{\beta}{2}(N+1)\right)}{\Gamma\left(\frac{\beta}{2}\right)^{N+1}}\prod_{1\le i<j \le N+1} (y_i-y_j)^{1-\beta}\prod_{1\le i<j\le N} (x_i-x_j)\prod_{i=1}^N \prod_{j=1}^{N+1}|x_i-y_j|^{\frac{\beta}{2}-1}\mathbf{1}_{\mathbf{x}\prec \mathbf{y}}d\mathbf{x},
\end{align*}
where $\mathbf{1}_{\mathcal{A}}$ is the indicator function of the set $\mathcal{A}$. The fact that this correctly integrates to one is a result of Dixon and Anderson, see \cite{Dixon,Anderson}. We note that this formula extends continuously to arbitrary $\mathbf{y}\in \mathbb{W}_{N+1}$. This is evident from an equivalent definition of the kernel $\mathsf{\Lambda}^{(\beta)}_{N+1,N}$ in terms of the distribution of the roots of a certain random polynomial associated to $\mathbf{y}\in \mathbb{W}_{N+1}$, see Definition 1.1 and Proposition 1.2 in \cite{assiotis2020boundary}.

\begin{rmk}\label{RemarkKernel}
This Markov kernel might seem to come out of thin air but for the values $\beta=1,2,4$ (which provide the motivation) $\mathsf{\Lambda}^{(\beta)}_{N+1,N}\left(\mathbf{y},\cdot\right)$ corresponds to the conditional distribution of the eigenvalues of the principal $N\times N$ submatrix of a random conjugation\footnote{Orthogonal ($\beta=1$) or unitary ($\beta=2$) or symplectic ($\beta=4$) conjugation respectively.}-invariant $(N+1)\times (N+1)$ self-adjoint matrix, with either real ($\beta=1$) or complex ($\beta=2$) or quaternion ($\beta=4$) entries, with given spectrum $\mathbf{y}=\left(y_1,\dots,y_{N+1}\right)$, see \cite{assiotis2020boundary} and Remark \ref{RmkMatrices} below.
\end{rmk}

The following definition will be important in what follows.

\begin{defn}
Let $\beta>0$ and $N\ge 1$. A  consistent random family of interlacing arrays with parameter $\beta$ and length $N$ is a family of random sequences $\left(\mathbf{x}^{(i)}\right)_{i=1}^N$ such that $\mathbf{x}^{(i)}\in \mathbb{W}_i$ with:
\begin{align*}
  \mathbf{x}^{(1)}\prec \mathbf{x}^{(2)} \prec \cdots \prec \mathbf{x}^{(N-1)} \prec \mathbf{x}^{(N)} 
\end{align*}
and the joint distribution $\mathcal{N}$ of the sequence $\left(\mathbf{x}^{(i)}\right)_{i=1}^N$ satisfies:
\begin{align*}
   \mathcal{N}\left(d\mathbf{x}^{(1)},\ldots,d\mathbf{x}^{(N)}\right)=\nu_N(d\mathbf{x}^{(N)})\mathsf{\Lambda}_{N,N-1}^{(\beta)}\left(\mathbf{x}^{(N)},d\mathbf{x}^{(N-1)}\right)\cdots \mathsf{\Lambda}^{(\beta)}_{2,1}\left(\mathbf{x}^{(2)},d\mathbf{x}^{(1)}\right),
\end{align*}
where $\nu_N$ is the distribution of the top row $\mathbf{x}^{(N)}$ in the sequence. A consistent random family of infinite interlacing arrays with parameter $\beta$ is a family of random infinite sequences $\left(\mathbf{x}^{(i)}\right)_{i=1}^{\infty}$ such that the above holds for all $N\ge 1$.
\end{defn}

\begin{rmk}\label{RmkMatrices} Consistent random families of interlacing arrays are very closely related to random conjugation-invariant matrices for $\beta=1, 2, 4$. Let $\mathbf{H}$ be a random infinite self-adjoint matrix with real (for $\beta=1$), complex (for $\beta=2$) or quaternion (for $\beta=4$) entries, so that the law of all its finite top-left submatrices is invariant under orthogonal (for $\beta=1$), unitary (for $\beta=2$) or symplectic (for $\beta=4$) conjugation. Then, the eigenvalues of the 
consecutive top-left submatrices of $\mathbf{H}$ form a consistent family of infinite interlacing arrays for parameter $\beta$. Conversely any consistent random family of infinite interlacing arrays for $\beta\in\{ 1, 2, 4\}$ comes from such an infinite conjugation-invariant random matrix $\mathbf{H}$, see Proposition 1.7 in \cite{assiotis2020boundary} for a proof.
\end{rmk}

We now define the so-called diagonal entries $\left(\mathsf{d}_1,\mathsf{d}_2,\mathsf{d}_3,\ldots\right)$ of an interlacing array $\left(\mathbf{x}^{(i)}\right)_{i=1}^{\infty}$ by $\mathsf{d}_1=\mathbf{x}^{(1)}$ and for $i\ge 1$:
\begin{equation*}
    \mathsf{d}_{i+1}=\sum_{j=1}^{i+1}x_j^{(i+1)}-\sum_{j=1}^{i}x_j^{(i)}.
\end{equation*}

\begin{rmk}
This terminology comes from the fact that it coincides with the usual notion of diagonal entries of matrices for $\beta \in \{1, 2, 4 \}$, see Remark \ref{RmkMatrices}. 
\end{rmk}

We have the following result for the diagonal entries of consistent random infinite interlacing arrays. 

\begin{prop}\label{DiagExchProp}
Let $\beta>0$. Let $\mathfrak{M}$ be the law of a consistent random infinite interlacing array with parameter $\beta$. Then, the random infinite sequence of diagonal entries $\left(\mathsf{d}_1,\mathsf{d}_2,\mathsf{d}_3,\ldots\right)$ associated to the array with law $\mathfrak{M}$ is exchangeable.
\end{prop}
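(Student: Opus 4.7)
Since an infinite random sequence is exchangeable if and only if each finite initial segment has a permutation-invariant law, it suffices to show that for every $N\ge 1$ the joint law of $(\mathsf{d}_1,\ldots,\mathsf{d}_N)$ is invariant under arbitrary permutations of its entries. This already uses only the top-$N$ marginal $\nu_N\otimes \mathsf{\Lambda}^{(\beta)}_{N,N-1}\otimes\cdots\otimes\mathsf{\Lambda}^{(\beta)}_{2,1}$ of $\mathfrak{M}$, so the statement we really need concerns the Dixon--Anderson Markov structure itself.

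For the three distinguished values $\beta\in\{1,2,4\}$ this is a transparent consequence of the matrix picture recalled in Remark \ref{RmkMatrices}: the array can be realised as the sequence of spectra of the top-left principal submatrices of a random self-adjoint matrix $\mathbf{H}$ whose law is invariant under conjugation by the orthogonal, unitary or symplectic group. Conjugation of $\mathbf{H}$ by a permutation matrix (which belongs to each of these groups) preserves the law of $\mathbf{H}$ and simultaneously permutes the diagonal entries $h_{ii}$, which coincide with $\mathsf{d}_i$; this gives the required exchangeability immediately.

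To handle general $\beta>0$, where no such matrix interpretation is available, the plan is to invoke the general-$\beta$ boundary theory of \cite{assiotis2020boundary}. There one proves that the convex set of consistent distributions on infinite interlacing arrays with parameter $\beta$ is a Choquet simplex whose extreme points admit an explicit parameterisation. I would then write the given $\mathfrak{M}$ as its unique mixture over this extreme boundary, and verify that under each extreme consistent measure the diagonal sequence $(\mathsf{d}_i)_{i\ge 1}$ is actually i.i.d. Since mixtures of i.i.d. sequences are automatically exchangeable (this is the easy direction of de Finetti's theorem), Fubini then promotes the claim from extreme measures to $\mathfrak{M}$ itself, completing the proof.

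The main obstacle, in my view, is establishing this i.i.d. property under the extreme consistent measures. This amounts to extracting from the explicit boundary description in \cite{assiotis2020boundary} the joint law of the successive partial row sums $\sum_{j=1}^{i}x_j^{(i)}$ and checking that their first differences $\mathsf{d}_{i+1}=\sum_j x_j^{(i+1)}-\sum_j x_j^{(i)}$ are both independent across $i$ and identically distributed. For $\beta=2$ this is the classical fact, going back to Olshanski--Vershik and Pickrell, that an ergodic unitarily invariant infinite Hermitian matrix has i.i.d. diagonal entries; I expect the general-$\beta$ version to follow along the same lines using the Jack-polynomial harmonic analysis underlying \cite{assiotis2020boundary}, possibly combined with a direct moment computation against the Dixon--Anderson density to verify the product structure of the characteristic function of $(\mathsf{d}_1,\ldots,\mathsf{d}_N)$ under a generic extreme point.
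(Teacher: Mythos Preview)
Your approach is exactly the paper's: decompose $\mathfrak{M}$ over the extreme boundary of \cite{assiotis2020boundary} and use that mixtures of i.i.d.\ sequences are exchangeable. The ``main obstacle'' you identify is in fact no obstacle --- the i.i.d.\ property of the diagonal entries under each extreme consistent measure is precisely Theorem~1.13 of \cite{assiotis2020boundary}, which the paper simply cites, so no additional Jack-polynomial analysis or moment computation is required.
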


\begin{proof} Fix $\beta>0$ and let $N\ge 1$ be arbitrary. For any consistent distribution/law $\mathfrak{M}$ on infinite interlacing arrays (with parameter $\beta$) we write
$\mathsf{Law}_{\mathfrak{M}}\left(\mathsf{d}_1,\dots,\mathsf{d}_N\right)$ for the joint law of the diagonal elements
$\mathsf{d}_1,\dots,\mathsf{d}_N$. The extremal consistent distributions on infinite interlacing arrays (namely the ones that cannot be written as a convex combination of other consistent distributions) have been classified in \cite{assiotis2020boundary}. They are parametrised by an infinite-dimensional space $\Omega$ endowed with a certain topology, see Definition 1.9 in \cite{assiotis2020boundary}. For any $\omega\in \Omega$ we denote by $\mathfrak{N}_{\omega}^{(\beta)}$ the corresponding extremal consistent distribution. Then, under $\mathfrak{N}_\omega^{(\beta)}$ the diagonal elements are i.i.d., see Theorem 1.13 in \cite{assiotis2020boundary}, namely
\begin{equation*}
 \mathsf{Law}_{\mathfrak{N}_\omega^{(\beta)}}\left(\mathsf{d}_1,\dots,\mathsf{d}_N\right)=\mathfrak{n}_{\omega}^{(\beta)}(dx_1)\cdots \mathfrak{n}_{\omega}^{(\beta)}(dx_N),
\end{equation*}
where the probability measure $\mathfrak{n}_\omega^{(\beta)}$ on $\mathbb{R}$ is explicit, see Theorem 1.13 in \cite{assiotis2020boundary}. Moreover, from Theorem 1.16 in \cite{assiotis2020boundary} we obtain that for any consistent distribution $\mathfrak{M}$ there exists a unique Borel probability measure $\nu^{\mathfrak{M}}$ on $\Omega$ such that
\begin{equation*}
 \mathsf{Law}_{\mathfrak{M}}\left(\mathsf{d}_1,\dots,\mathsf{d}_N\right)=\int_{\Omega}\nu^{\mathfrak{M}}(d\omega)\mathsf{Law}_{\mathfrak{N}_\omega^{(\beta)}}\left(\mathsf{d}_1,\dots,\mathsf{d}_N\right)=\int_{\Omega}\nu^{\mathfrak{M}}(d\omega)\mathfrak{n}_{\omega}^{(\beta)}(dx_1)\cdots \mathfrak{n}_{\omega}^{(\beta)}(dx_N).
\end{equation*}
Now, let $\sigma$ be an arbitrary permutation from the symmetric group on $N$ symbols. Then,
\begin{equation*}
 \mathsf{Law}_{\mathfrak{M}}\left(\mathsf{d}_{\sigma(1)},\dots,\mathsf{d}_{\sigma(N)}\right)=\int_{\Omega}\nu^{\mathfrak{M}}(d\omega)\mathfrak{n}_{\omega}^{(\beta)}(dx_{\sigma(1)})\cdots \mathfrak{n}_{\omega}^{(\beta)}(dx_{\sigma(N)})= \mathsf{Law}_{\mathfrak{M}}\left(\mathsf{d}_1,\dots,\mathsf{d}_N\right)
\end{equation*}
and since $N\ge 1$ is arbitrary the desired conclusion follows.
\end{proof}

We also need the following notion of consistent sequences of probability measures on the Weyl chambers. Intuitively it can be understood as looking at the individual rows of a consistent random infinite interlacing array.

\begin{defn}
We say that a sequence of probability measures $\{\mu_N\}_{N=1}^{\infty}$ on $\{\mathbb{W}_N \}_{N\ge 1}$ is consistent for a parameter $\beta$ if:
\begin{align}\label{consistency}
 \mu_{N+1}\mathsf{\Lambda}_{N+1,N}^{(\beta)}=\mu_N, \ \ \forall N\ge 1.   
\end{align}
\end{defn}

Using Kolmogorov's extension theorem we see that consistent distributions of parameter $\beta$ on infinite interlacing arrays are in canonical bijection (under which the law of the $N$-th row of the array is given by $\mu_N$) with consistent sequences of probability measures for parameter $\beta$ defined above. 

We have the following general convergence result from \cite{assiotis2020boundary} for the sample mean of the elements on the $N$-th row.

\begin{prop}\label{TraceConv}
Let $\beta>0$. Let $\{\mu_N \}_{N=1}^{\infty}$ be a consistent sequence of probability measures for the parameter $\beta$. Let $\mathfrak{M}$ be the law of the corresponding consistent random infinite interlacing array. Denote by $\left(\mathsf{x}^{(N)}_1,\ldots, \mathsf{x}^{(N)}_N\right)$ the elements of the $N$-th row (having law $\mu_N$) and consider the average
$\mathsf{T}_N=N^{-1}\left(\mathsf{x}_1^{(N)}+\cdots+\mathsf{x}_N^{(N)}\right)$. Then, $\mathsf{T}_N$ converges $\mathfrak{M}$-almost surely as $N\to \infty $ to some random variable $\mathsf{T}_\infty$.
\end{prop}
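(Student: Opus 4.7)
The plan is to reduce the statement to a strong law of large numbers for the exchangeable sequence of diagonal entries. First I would unwind the definition of $\mathsf{d}_i$ to obtain the telescoping identity
\begin{equation*}
\sum_{i=1}^N \mathsf{d}_i \;=\; \sum_{j=1}^N \mathsf{x}_j^{(N)},
\end{equation*}
so that $\mathsf{T}_N = N^{-1}(\mathsf{d}_1+\cdots+\mathsf{d}_N)$ is exactly the $N$-th Ces\`aro mean of the diagonal sequence. This is the key algebraic fact that converts a statement about rows of the array (where the coordinates are strongly dependent through the interlacing constraints) into a statement about the diagonal sequence (which has much more symmetry).

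Next I would invoke Proposition \ref{DiagExchProp} to conclude that $(\mathsf{d}_i)_{i\ge 1}$ is exchangeable under $\mathfrak{M}$. De Finetti's theorem then writes the law as a mixture of i.i.d.\ laws; in the language of the classification result of \cite{assiotis2020boundary} used in the proof of Proposition \ref{DiagExchProp}, this mixture is parametrised by $\omega\in \Omega$ distributed under $\nu^{\mathfrak{M}}$, and conditionally on $\omega$ the $\mathsf{d}_i$ are i.i.d.\ with law $\mathfrak{n}_\omega^{(\beta)}$. It therefore suffices to produce a Borel function $m\colon \Omega\to \mathbb{R}$ with $\mathsf{T}_N\to m(\omega)$ conditionally a.s.; the $\mathfrak{M}$-a.s.\ limit is then $\mathsf{T}_\infty := m(\omega)$.

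The final step is a conditional strong law. Assuming each ergodic component $\mathfrak{n}_\omega^{(\beta)}$ has a finite first moment $m(\omega)$, the classical SLLN gives $\mathsf{T}_N \to m(\omega)$ a.s.\ under $\mathfrak{n}_\omega^{(\beta)\,\otimes\infty}$, and integrating against $\nu^{\mathfrak{M}}$ yields the $\mathfrak{M}$-a.s.\ convergence. The main obstacle is exactly this conditional integrability: the unconditioned law of $\mathsf{d}_1$ can be heavy-tailed (Cauchy-type in the Hua--Pickrell examples relevant to the main theorem), so nothing can be deduced directly from the marginal law alone. What saves the day is the explicit description of the ergodic measures $\mathfrak{n}_\omega^{(\beta)}$ provided by \cite{assiotis2020boundary}: the ``location'' parameters of $\omega$ (in the notation of that paper, $\gamma_1$ together with the $\alpha_i^{\pm}$) determine a distinguished drift of the distribution, the tails are light enough once one removes this drift, and $m(\omega)$ is obtained as the corresponding explicit functional of $\omega$. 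Making this bookkeeping precise is the technical input drawn from \cite{assiotis2020boundary}; once this is in hand, the conclusion follows by the argument above.
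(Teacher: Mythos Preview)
Your approach is correct but differs from the paper's, which is simply a one-line citation: Theorem~3.6 of \cite{assiotis2020boundary} shows that for any consistent $\mathfrak{M}$ the O--V parameters converge almost surely, and the parameter $\gamma_1^{(N)}$ there is precisely $\mathsf{T}_N$. Your route via exchangeability of $(\mathsf{d}_i)$, de Finetti, and a conditional SLLN is more transparent about \emph{why} convergence holds, but it leans on strictly more from \cite{assiotis2020boundary}: the classification theorem (already used inside Proposition~\ref{DiagExchProp}) together with the explicit form of $\mathfrak{n}_\omega^{(\beta)}$ needed to verify the conditional integrability $\int |x|\,d\mathfrak{n}_\omega^{(\beta)}(x)<\infty$. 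You are right that this integrability is the crux---without it the SLLN genuinely fails, as for i.i.d.\ Cauchy---and it does hold: each $\mathfrak{n}_\omega^{(\beta)}$ has finite variance, being a drift plus a Gaussian plus an $L^2$-convergent sum of centred gamma-type pieces with square-summable scales. Be aware, though, that in \cite{assiotis2020boundary} the almost-sure convergence of $\mathsf{T}_N$ is established \emph{en route} to the classification itself, so your argument is in a sense logically downstream of the result the paper cites directly.
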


\begin{proof}
This is a consequence of Theorem 3.6 in \cite{assiotis2020boundary}. The O-V condition, see Definition 2.1 and Definition 2.2 in \cite{assiotis2020boundary}, for the parameter $\gamma_1^{(N)}$ therein, which holds almost surely, for any consistent distribution $\mathfrak{M}$ with parameter $\beta$ on infinite interlacing arrays, is exactly the convergence of $\mathsf{T}_N$ (as $\gamma_1^{(N)}=\mathsf{T}_N$).
\end{proof}

We now move on to upgrading this convergence to convergence of the moments. When $\mathsf{T}_1=\mathsf{x}_1^{(1)}=\mathsf{d}_1$ is an $L^1$ random variable this can be achieved in a rather neat way. One could in principle use Proposition \ref{TraceConv} above along with proving uniform integrability which can be done using exchangeability. This approach was taken in \cite{assiotis2020joint}. Here instead we give a somewhat different argument. Below we denote by $\mathbb{E}_{\mu}$ the expectation with respect to a probability measure $\mu$. 

\begin{prop}\label{TraceMomentsConv}
In the setting of Proposition \ref{TraceConv} above, suppose $r\ge1$ and $\mathbb{E}_{\mu_1}\left[\left|\mathsf{T}_1\right|^r\right]<\infty$. Then, for any $0\le t\le r$ we have
\begin{align*}
 \mathbb{E}_{\mu_N}\left[\left|\mathsf{T}_N\right|^t\right]  \overset{N\to \infty}{\longrightarrow} \mathbb{E}_{\mathfrak{M}}\left[\left|\mathsf{T}_\infty\right|^t\right]<\infty.
\end{align*}
\end{prop}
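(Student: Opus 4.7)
The plan is to exploit the exchangeability of the diagonal entries $(\mathsf{d}_i)_{i\geq 1}$ from Proposition \ref{DiagExchProp}, together with the telescoping identity that follows directly from the definition of $\mathsf{d}_i$:
\begin{equation*}
  \sum_{j=1}^N \mathsf{x}_j^{(N)}=\sum_{i=1}^N \mathsf{d}_i,
\end{equation*}
so that $\mathsf{T}_N=N^{-1}\sum_{i=1}^N \mathsf{d}_i$ is the empirical average of an exchangeable sequence. Since $\mathsf{T}_1=\mathsf{d}_1$, the hypothesis is equivalent to $\mathsf{d}_1\in L^r(\mathfrak{M})$, and by exchangeability each $\mathsf{d}_i\in L^r(\mathfrak{M})$ with the same norm.

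With this in hand, my strategy is to realise $\mathsf{T}_N$ as a reverse martingale. Define the decreasing family of sigma-algebras $\mathcal{G}_N=\sigma(\mathsf{T}_N,\mathsf{d}_{N+1},\mathsf{d}_{N+2},\dots)$. By the standard exchangeability argument (invariance of the joint law of $(\mathsf{d}_1,\dots,\mathsf{d}_N)$ under permutations conditional on their partial sum and on all later entries), one has $\mathbb{E}_{\mathfrak{M}}[\mathsf{d}_i\mid\mathcal{G}_N]=\mathsf{T}_N$ for every $i\le N$, so $(\mathsf{T}_N)_{N\geq 1}$ is a reverse martingale closed by the $L^r$ random variable $\mathsf{d}_1$. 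The reverse martingale convergence theorem then yields $\mathsf{T}_N\to \mathbb{E}_{\mathfrak{M}}[\mathsf{d}_1\mid\mathcal{G}_\infty]$ almost surely and in $L^r$, where $\mathcal{G}_\infty=\bigcap_N \mathcal{G}_N$. Since Proposition \ref{TraceConv} already supplies the almost sure limit $\mathsf{T}_\infty$, the reverse-martingale limit must coincide with $\mathsf{T}_\infty$ off a $\mathfrak{M}$-null set, so in particular $\mathsf{T}_\infty\in L^r(\mathfrak{M})$.

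To conclude, the $L^r$ convergence of $\mathsf{T}_N\to \mathsf{T}_\infty$ upgrades to convergence of the absolute $t$-th moments for every $0\le t\le r$: the bound $\sup_N \|\mathsf{T}_N\|_r<\infty$ combined with a.s. convergence forces $\{|\mathsf{T}_N|^t\}_{N\ge 1}$ to be uniformly integrable (via de la Vallée Poussin for $t<r$, and directly via the reverse martingale theorem for $t=r$), so $\mathbb{E}_{\mu_N}[|\mathsf{T}_N|^t]=\mathbb{E}_{\mathfrak{M}}[|\mathsf{T}_N|^t]\to \mathbb{E}_{\mathfrak{M}}[|\mathsf{T}_\infty|^t]<\infty$. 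The only nontrivial ingredient in the argument is Proposition \ref{DiagExchProp}, which is where the deep input from \cite{assiotis2020boundary} enters; everything else reduces to classical exchangeability and reverse martingale machinery, so I do not anticipate any substantial obstacle beyond bookkeeping.
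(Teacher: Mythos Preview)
Your proposal is correct and follows essentially the same route as the paper: both write $\mathsf{T}_N=N^{-1}\sum_{i=1}^N\mathsf{d}_i$, invoke the exchangeability from Proposition~\ref{DiagExchProp} to recognise $(\mathsf{T}_N)_{N\ge 1}$ as a backward (reverse) martingale closed by $\mathsf{d}_1\in L^r$, and then apply the backward martingale convergence theorem to get $L^r$ convergence to $\mathsf{T}_\infty$. The only cosmetic differences are that the paper uses the exchangeable filtration rather than your $\mathcal{G}_N$, and deduces the case $0\le t\le r$ from Jensen's inequality rather than your uniform integrability argument; both are standard and equivalent.
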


\begin{proof}
Clearly, we have $\mathsf{T}_N=N^{-1}\left(\mathsf{d}_1+\cdots+\mathsf{d}_N\right)$. It is then standard that the sequence $\left(\mathsf{T}_{-N}\right)_{N\le -1}=\left(\mathsf{T}_N\right)_{N\ge 1}$ forms a backward martingale with respect to the exchangeable filtration\footnote{For each $N\ge 1$, the sigma algebra $\mathcal{E}_{-N}$ consists of events $\mathcal{A}$ whose indicator functions $\mathbf{1}_{\mathcal{A}}$ are Borel functions of the random variables $\left(\mathsf{Y}_1,\mathsf{Y}_2,\mathsf{Y}_3,\ldots\right)$ such that for any permutation $\sigma$ of $\{1,\ldots,N\}$ we have: 
\begin{align*}
\mathbf{1}_{\mathcal{A}}\left(\mathsf{Y}_1,\mathsf{Y}_2,\mathsf{Y}_3,\dots\right)=\mathbf{1}_{\mathcal{A}}\left(\mathsf{Y}_{\sigma(1)},\mathsf{Y}_{\sigma(2)},\dots,\mathsf{Y}_{\sigma(N)},\mathsf{Y}_{N+1},\ldots\right).
\end{align*}
The sigma algebra $\mathcal{E}=\cap_{N\ge 1}\mathcal{E}_{-N}$ is called the exchangeable sigma algebra.}, see \cite{kallenberg2002foundations}. Then, since $r\ge 1$ and $\mathbb{E}_{\mu_1}\left[\left|\mathsf{T}_1\right|^r\right]<\infty$, from the backward martingale convergence theorem, see \cite{kallenberg2002foundations}, we have convergence\footnote{The backward martingale convergence theorem also gives almost sure convergence since $\mathbb{E}_{\mu_1}\left[\left|\mathsf{T}_1\right|\right]<\infty$ (this is the law of large numbers for exchangeable sequences). We note that Proposition \ref{TraceConv} on almost sure convergence also applies to consistent sequences of measures for which $\mathsf{T}_1$ is not in $L^1$.} in $L^r$:
\begin{align*}
 \mathbb{E}_{\mathfrak{M}}\left[\left|\mathsf{T}_N-\mathsf{T}_\infty\right|^{r}\right] \overset{N \to \infty}{\longrightarrow} 0. 
\end{align*}
The extension to $0\le t \le r$ follows from Jensen's inequality.
\end{proof}

Finally, we have the following explicit formula for the positive integer $r$-moments of the abstract random variable $\mathsf{T}_\infty$ in terms of the $r$-moments of the sums of elements of the rows $1$ up to $r$ of the array (which can be analysed in a concrete way in many cases). The proof relies in an essential way on exchangeability and is an abstraction to, what seems to be, the most general setting of an argument given in \cite{assiotis2020joint}.

\begin{prop}\label{FormulaProp}
In the setting of Proposition \ref{TraceConv}, let $r\in \mathbb{N}$ and assume $\mathbb{E}_{\mu_1}\left[\left|\mathsf{T}_1\right|^{r}\right]<\infty$. Then, we have the following formula:
\begin{align}\label{EvenMomentsFormula}
\mathbb{E}_{\mathfrak{M}}\left[\mathsf{T}_{\infty}^{r}\right] =\frac{1}{r!} \sum_{k=1}^{r} (-1)^{r-k} \binom{r}{k} \mathbb{E}_{\mu_k}\left[\left(\mathsf{x}_1^{(k)}+\cdots+\mathsf{x}^{(k)}_k\right)^{r}\right].
\end{align}
\end{prop}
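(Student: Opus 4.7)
The plan is to reduce both sides of (\ref{EvenMomentsFormula}) to the common quantity $\mathbb{E}_{\mathfrak{M}}[\mathsf{d}_1\mathsf{d}_2\cdots\mathsf{d}_r]$, exploiting the exchangeability of the diagonal sequence $(\mathsf{d}_i)_{i\ge 1}$ established in Proposition \ref{DiagExchProp}. By the telescoping definition of the diagonal entries one has $\mathsf{x}_1^{(k)}+\cdots+\mathsf{x}_k^{(k)}=\mathsf{d}_1+\cdots+\mathsf{d}_k$, so the right-hand side of (\ref{EvenMomentsFormula}) can be rewritten purely in terms of the exchangeable $\mathsf{d}_i$. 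All the mixed moments below are finite: exchangeability gives $\mathsf{d}_i\stackrel{d}{=}\mathsf{T}_1\in L^r$, and Hölder's inequality then yields $\mathbb{E}_{\mathfrak{M}}[|\mathsf{d}_{i_1}\cdots\mathsf{d}_{i_r}|]\le \mathbb{E}_{\mu_1}[|\mathsf{T}_1|^r]<\infty$.

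First I would expand $(\mathsf{d}_1+\cdots+\mathsf{d}_k)^r$ via the multinomial theorem and group the $k^r$ resulting summands according to the set partition $\pi$ of $\{1,\dots,r\}$ recording which of the chosen indices coincide. By exchangeability, $\mathbb{E}_{\mathfrak{M}}[\mathsf{d}_{i_1}\cdots\mathsf{d}_{i_r}]$ depends only on the multiset of block sizes of $\pi$, and the number of $r$-tuples in $\{1,\dots,k\}^r$ inducing a given $\pi$ with $|\pi|$ blocks is the falling factorial $k^{\underline{|\pi|}}=k(k-1)\cdots(k-|\pi|+1)$. Writing $M_\pi:=\mathbb{E}_{\mathfrak{M}}[\mathsf{d}_1^{m_1}\cdots\mathsf{d}_{|\pi|}^{m_{|\pi|}}]$ for the block sizes $m_1,\ldots,m_{|\pi|}$ of $\pi$, this yields
\begin{equation*}
Q(k):=\mathbb{E}_{\mu_k}\left[(\mathsf{d}_1+\cdots+\mathsf{d}_k)^r\right]=\sum_{\pi}k^{\underline{|\pi|}}\,M_\pi,
\end{equation*}
a polynomial in $k$ of degree exactly $r$ whose coefficient of $k^{\underline{r}}$ comes from the unique partition into singletons and equals $\mathbb{E}_{\mathfrak{M}}[\mathsf{d}_1\mathsf{d}_2\cdots\mathsf{d}_r]$.

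Next I would apply the elementary finite-difference identity $\sum_{k=0}^r(-1)^{r-k}\binom{r}{k}k^{\underline{j}}=r!\,\delta_{j,r}$ (verified by substituting $m=k-j$ and using the binomial theorem) to $Q$. Since $Q(0)=0$, the $k=0$ term drops and one obtains
\begin{equation*}
\frac{1}{r!}\sum_{k=1}^{r}(-1)^{r-k}\binom{r}{k}Q(k)=\mathbb{E}_{\mathfrak{M}}[\mathsf{d}_1\mathsf{d}_2\cdots\mathsf{d}_r].
\end{equation*}
To identify this with the left-hand side of (\ref{EvenMomentsFormula}), note that dividing the polynomial expression for $Q(N)$ by $N^r$ gives $\mathbb{E}_{\mu_N}[\mathsf{T}_N^r]=N^{-r}Q(N)\to \mathbb{E}_{\mathfrak{M}}[\mathsf{d}_1\mathsf{d}_2\cdots\mathsf{d}_r]$ as $N\to\infty$, since $N^{\underline{j}}/N^r\to\delta_{j,r}$. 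On the other hand, the backward martingale argument in the proof of Proposition \ref{TraceMomentsConv} gives $L^r$ convergence $\mathsf{T}_N\to\mathsf{T}_\infty$, which via the bound $|\mathsf{T}_N^r-\mathsf{T}_\infty^r|\le r\max(|\mathsf{T}_N|,|\mathsf{T}_\infty|)^{r-1}|\mathsf{T}_N-\mathsf{T}_\infty|$ and Hölder implies $\mathbb{E}_{\mu_N}[\mathsf{T}_N^r]\to \mathbb{E}_{\mathfrak{M}}[\mathsf{T}_\infty^r]$. Equating the two limits completes the identification.

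The argument is essentially combinatorial once exchangeability is in hand, so I do not expect any serious obstacle. The main conceptual point, and the step that most deserves careful verification, is the polynomial representation of $Q(k)$: one must confirm that exchangeability collapses the multinomial expansion into an honest sum of falling factorials and that the singleton partition contributes precisely the fully-distinct moment $\mathbb{E}_{\mathfrak{M}}[\mathsf{d}_1\cdots\mathsf{d}_r]$. After this, standard finite-difference calculus together with the $L^r$ convergence already contained in Proposition \ref{TraceMomentsConv} delivers (\ref{EvenMomentsFormula}) immediately.
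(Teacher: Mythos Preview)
Your proof is correct and follows essentially the same arc as the paper's: both arguments reduce each side of (\ref{EvenMomentsFormula}) to the mixed moment $\mathbb{E}_{\mathfrak{M}}[\mathsf{d}_1\cdots\mathsf{d}_r]$ using exchangeability of the diagonal sequence, and your set-partition expansion together with the finite-difference identity on falling factorials is exactly the content of the ``elementary identity'' $w_1\cdots w_r=\frac{1}{r!}\sum_{k=1}^r(-1)^{r-k}\sum_{i_1<\cdots<i_k}(w_{i_1}+\cdots+w_{i_k})^r$ that the paper invokes for the right-hand side.

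The one genuine difference is in handling the left-hand side. The paper identifies $\mathbb{E}_{\mathfrak{M}}[\mathsf{T}_\infty^r]=\mathbb{E}_{\mathfrak{M}}[\mathsf{d}_1\cdots\mathsf{d}_r]$ directly via the de~Finetti--Hewitt--Savage theorem: the $\mathsf{d}_i$ are conditionally i.i.d.\ given the exchangeable $\sigma$-algebra $\mathcal{E}$, backward martingale convergence gives $\mathsf{T}_\infty=\mathbb{E}[\mathsf{d}_i\mid\mathcal{E}]$, and the tower property yields $\mathbb{E}[\mathsf{d}_1\cdots\mathsf{d}_r]=\mathbb{E}\big[\mathbb{E}[\mathsf{d}_1\mid\mathcal{E}]\cdots\mathbb{E}[\mathsf{d}_r\mid\mathcal{E}]\big]=\mathbb{E}[\mathsf{T}_\infty^r]$. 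You instead read off $\mathbb{E}_{\mathfrak{M}}[\mathsf{d}_1\cdots\mathsf{d}_r]$ as the leading coefficient of the polynomial $Q(N)=\mathbb{E}_{\mu_N}[(N\mathsf{T}_N)^r]$ and match it to $\mathbb{E}_{\mathfrak{M}}[\mathsf{T}_\infty^r]$ through the $L^r$ convergence already established in Proposition~\ref{TraceMomentsConv}. Your route is slightly more self-contained in that it avoids invoking de~Finetti, at the modest cost of importing the $L^r$ convergence; the paper's route is a line shorter once de~Finetti is in hand.
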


\begin{proof} We prove the following more general statement. Let $\left(\mathsf{Y}_1,\mathsf{Y}_2,\mathsf{Y}_3,\ldots\right)$ be any sequence of exchangeable real random variables, with $\mathbb{E}\left[\left|\mathsf{Y}_1\right|^r\right]<\infty$, where $h\in \mathbb{N}$ and consider $\mathsf{Y}_\infty=\lim_{N\to \infty} N^{-1}\sum_{i=1}^N \mathsf{Y}_i$. This limit exists by the law of large numbers for exchangeable sequences (by assumption the $\mathsf{Y}_i$ are integrable), see \cite{kallenberg2002foundations}. Then, we have the following formula (by assumption all moments involved are finite):
\begin{align}\label{exchangeablemomentsformula}
 \mathbb{E}\left[\mathsf{Y}_{\infty}^{r}\right] =\frac{1}{r!} \sum_{k=1}^{r} (-1)^{r-k} \binom{r}{k} \mathbb{E}\left[\left(\mathsf{Y}_1+\cdots+\mathsf{Y}_k\right)^{r}\right],
\end{align}
which we will prove shortly. The statement of the proposition is then a consequence of (\ref{exchangeablemomentsformula}) and the simple observation:
\begin{align*}
\mathbb{E}_{\mathfrak{M}}\left[\left(\mathsf{d}_1+\cdots+\mathsf{d}_k\right)^{r}\right]=\mathbb{E}_{\mu_k}\left[\left(\mathsf{x}_1^{(k)}+\cdots+\mathsf{x}_k^{(k)}\right)^{r}\right].
\end{align*}
We now turn to the proof of (\ref{exchangeablemomentsformula}). We claim that
\begin{align}\label{2hMoments}
  \mathbb{E}\left[\mathsf{Y}_\infty^{r}\right]=\mathbb{E}\left[\mathsf{Y}_1\mathsf{Y}_2\cdots \mathsf{Y}_{r}\right].
\end{align}
Then, (\ref{exchangeablemomentsformula}) is a consequence of (\ref{2hMoments}) along with the formula
\begin{align*}
 \mathbb{E}\left[\mathsf{Y}_1\mathsf{Y}_2\cdots \mathsf{Y}_{r}\right]=\frac{1}{r!}\sum_{k=1}^{r}(-1)^{r-k}\binom{r}{k}\mathbb{E}\left[\left(\mathsf{Y}_1+\cdots+\mathsf{Y}_k\right)^{r}\right], 
\end{align*}
which follows from exchangeability and the elementary identity, see \cite{assiotis2020boundary} for a proof:
\begin{align*}
 w_1\cdots w_{r}=\frac{1}{r!}\sum_{k=1}^{r}(-1)^{r-k}\sum_{1\le i_1 <i_2<\cdots < i_k\le r}\left(w_{i_1}+w_{i_2}+\cdots + w_{i_{k}}\right)^{r}.
\end{align*}

It remains to prove (\ref{2hMoments}). We recall a couple of classical facts about infinite sequences of exchangeable random variables, see \cite{kallenberg2002foundations}. Let $\mathcal{E}$ denote the exchangeable sigma algebra. Then, by the de Finetti-Hewitt-Savage theorem the $\left(\mathsf{Y}_i\right)_{i=1}^{\infty}$ are conditionally i.i.d.\ given $\mathcal{E}$ and moreover (since the $\mathsf{Y}_i$ are integrable) by the backward martingale convergence theorem $\mathsf{Y}_\infty=\mathbb{E}\left[\mathsf{Y}_i|\mathcal{E}\right]$. Hence, we can compute using the tower property:
\begin{align*}
  \mathbb{E}\left[\mathsf{Y}_1\mathsf{Y}_2\cdots \mathsf{Y}_{r}\right]=\mathbb{E}\left[\mathbb{E}\left[\mathsf{Y}_1\cdots\mathsf{Y}_{r}|\mathcal{E}\right]\right]=\mathbb{E}\left[\mathbb{E}\left[\mathsf{Y}_1|\mathcal{E}\right]\cdots \mathbb{E}\left[\mathsf{Y}_{r}|\mathcal{E}\right]\right]=\mathbb{E}\left[\mathsf{Y}_\infty^{r}\right].
\end{align*}
\end{proof}
\subsection{Application to the Hua-Pickrell measures}

We now specialize the previous results to the case of the Hua-Pickrell measures from Definition \ref{DefnHP}. We begin with the following which says that the theory of Section \ref{InterlacingConvSection} is indeed applicable.

\begin{lem}\label{HPconsistency}
Let $\beta>0$ and $\Re(\tau)>-\frac{1}{2}$. The Hua-Pickrell measures $\left\{\mathfrak{m}_{N,\beta}^{(\tau)} \right\}_{N=1}^\infty$ are consistent with parameter $\beta$.
\end{lem}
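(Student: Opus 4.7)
The plan is to verify directly that $\mathfrak{m}_{N+1,\beta}^{(\tau)}\mathsf{\Lambda}_{N+1,N}^{(\beta)} = \mathfrak{m}_{N,\beta}^{(\tau)}$, which is the definition of consistency. One can also just cite the classification in \cite{assiotis2020boundary}, where the Hua-Pickrell / Cauchy $\beta$-ensemble appears as one of the distinguished consistent families (as noted in the text after Remark/Proposition \ref{TraceConv}); but the direct verification is short and instructive, so I would sketch it.

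First, I would simplify the density of $\mathfrak{m}_{N+1,\beta}^{(\tau)}(d\mathbf{y})\mathsf{\Lambda}^{(\beta)}_{N+1,N}(\mathbf{y},d\mathbf{x})$. Because $\mathbf{y}\in\mathbb{W}_{N+1}$, we have $|\Delta(\mathbf{y})|^\beta = \prod_{i<j}(y_i-y_j)^\beta$, and this combines with the factor $\prod_{i<j}(y_i-y_j)^{1-\beta}$ from the kernel to give the signed Vandermonde $\Delta(\mathbf{y})$. So the product takes the form
\begin{equation*}
\frac{(N+1)!}{\mathcal{C}_{N+1,\beta}^{(\tau)}}\cdot\frac{\Gamma(\beta(N+1)/2)}{\Gamma(\beta/2)^{N+1}}\,\Delta(\mathbf{y})\prod_{j=1}^{N+1}w_\tau^{(N+1)}(y_j)\prod_{i<j\le N}(x_i-x_j)\prod_{i=1}^N\prod_{j=1}^{N+1}|x_i-y_j|^{\beta/2-1}\mathbf{1}_{\mathbf{x}\prec\mathbf{y}}\,d\mathbf{y}\,d\mathbf{x},
\end{equation*}
with $w_\tau^{(N+1)}(y)=(1+\mathrm{i}y)^{-\tau-\beta N/2-1}(1-\mathrm{i}y)^{-\bar{\tau}-\beta N/2-1}$.

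Next, I would carry out the $\mathbf{y}$-integral over the interlacing region $y_1\ge x_1\ge y_2\ge\cdots\ge x_N\ge y_{N+1}$ using the Dixon-Anderson integral identity \cite{Dixon,Anderson}, in the version adapted to the two semi-infinite outer intervals by using the Cauchy-type weight $w_\tau^{(N+1)}$. Concretely, one may apply the Cayley transform $y=\cot(\theta/2)$ to map the integral to the closed arc and reduce it to the classical Dixon-Anderson / Selberg-type integral for the Circular Jacobi $\beta$-ensemble; alternatively, one can use the real-line version of Dixon-Anderson for the Cauchy weight, which is established in \cite{ForresterBook} and used in the study of the Cauchy $\beta$-ensemble. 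The integral evaluates to a product whose $x$-dependent part is
\begin{equation*}
\prod_{j=1}^N w_\tau^{(N)}(x_j)\,,\qquad w_\tau^{(N)}(x)=(1+\mathrm{i}x)^{-\tau-\beta(N-1)/2-1}(1-\mathrm{i}x)^{-\bar{\tau}-\beta(N-1)/2-1},
\end{equation*}
times an explicit constant (in terms of Gamma functions), as the exponents on the Cauchy weight shift by $\beta/2$ upon integration, precisely matching the shift between level $N+1$ and level $N$ in the definition of $\mathfrak{m}^{(\tau)}_{\cdot,\beta}$. Combined with $\prod_{i<j\le N}(x_i-x_j)\cdot(\text{the }|x_i-y_j|^{\beta/2-1}\text{ contribution after integration})$, one recovers the factor $|\Delta(\mathbf{x})|^\beta$ and a total constant that, by comparing with \eqref{eq:normconst}, equals $N!/\mathcal{C}_{N,\beta}^{(\tau)}$.

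The main technical point is the convergence of the Dixon-Anderson integral over the two unbounded outer intervals ($y_1\to+\infty$ and $y_{N+1}\to-\infty$); the integrability needs the decay $|w_\tau^{(N+1)}(y)|\sim|y|^{-\beta N-2-2\Re(\tau)}$ to beat the polynomial growth of degree $N$ coming from the Vandermonde and the $|x_i-y_j|^{\beta/2-1}$ terms. A short computation shows this is exactly the condition $\Re(\tau)>-1/2$ assumed in the statement. Since the normalising constant then matches, we obtain $\mathfrak{m}_{N+1,\beta}^{(\tau)}\mathsf{\Lambda}_{N+1,N}^{(\beta)} = \mathfrak{m}_{N,\beta}^{(\tau)}$ for every $N\ge 1$, as required.
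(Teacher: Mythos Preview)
Your approach is correct and essentially the same as the paper's: both verify the consistency relation by evaluating the push-forward integral directly, and the paper does this in one line by citing Lemma~2.2 of \cite{Neretin_Triangle}, where precisely the computation you outline (the Dixon--Anderson-type integral with the Cauchy weight over the interlacing region) is carried out. One minor correction to your sketch: the integrability of the two outer $y$-integrals actually only requires $\Re(\tau) > -\tfrac{1}{2} - \tfrac{\beta N}{4}$, not ``exactly'' $\Re(\tau) > -\tfrac{1}{2}$; the latter is the sharp condition for $\mathfrak{m}_{1,\beta}^{(\tau)}$ itself to be a probability measure.
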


\begin{proof}
The required multidimensional integral (\ref{consistency}) that needs to be checked follows from Lemma 2.2 of \cite{Neretin_Triangle}.
\end{proof}

We denote by $\mathsf{X}_\beta(\tau)$ the limiting random variable $\mathsf{T}_\infty$ from Proposition \ref{TraceConv} corresponding to $\left\{\mathfrak{m}_{N,\beta}^{(\tau)} \right\}_{N=1}^\infty$. We note that the law of $\mathsf{X}_\beta(\tau)$ is symmetric about the origin whenever $\tau \in \mathbb{R}$, as this holds for each pre-limit random variable $\mathsf{T}_N$ using the symmetry of $\mathfrak{m}_{N,\beta}^{(\tau)}$ for $\tau \in \mathbb{R}$.

\begin{prop}\label{HPmomentsconv}
Let $\beta,\Re(\tau)>0$ and $0\le h <\Re(\tau)+\frac{1}{2}$. Then, 
\begin{align*}
  \lim\limits_{N \to \infty} \frac{1}{N^{2h}}\mathbb{E}_{N,\beta}^{(\tau)}\left[\left|x_1^{(N)}+\cdots+x_N^{(N)}\right|^{2h}\right]  =\mathbb{E}\left[\left|\mathsf{X}_\beta(\tau)\right|^{2h}\right]<\infty.
\end{align*}
\end{prop}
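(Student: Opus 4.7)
The plan is to reduce the statement directly to the abstract machinery developed in Section \ref{InterlacingConvSection}. Since $\mathsf{T}_N = N^{-1}(x_1^{(N)}+\cdots+x_N^{(N)})$ by definition, the quantity on the left-hand side is precisely $\mathbb{E}_{\mu_N}[|\mathsf{T}_N|^{2h}]$ with $\mu_N = \mathfrak{m}_{N,\beta}^{(\tau)}$. By Lemma \ref{HPconsistency}, the sequence $\{\mathfrak{m}_{N,\beta}^{(\tau)}\}_{N \ge 1}$ is consistent with parameter $\beta$, so Proposition \ref{TraceConv} applies and produces the almost sure limit, which by definition is $\mathsf{X}_\beta(\tau)$. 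Hence the problem is to upgrade this almost sure convergence to convergence of the $2h$-moments, which is exactly the content of Proposition \ref{TraceMomentsConv} — provided one can verify the single-variable integrability hypothesis $\mathbb{E}_{\mu_1}[|\mathsf{T}_1|^r] < \infty$ for some $r \ge \max(1, 2h)$.

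The main (and really the only) technical step is therefore to check this integrability hypothesis for a suitable $r$. The measure $\mathfrak{m}_{1,\beta}^{(\tau)}$ on $\mathbb{R}$ has density proportional to $(1+\textnormal{i}x)^{-\tau-1}(1-\textnormal{i}x)^{-\overline{\tau}-1}$, whose modulus equals
\begin{equation*}
(1+x^2)^{-\Re(\tau)-1}\, e^{2\Im(\tau)\arctan(x)}.
\end{equation*}
The exponential factor is uniformly bounded on $\mathbb{R}$, so the density decays like $|x|^{-2\Re(\tau)-2}$ at infinity, and consequently $\mathbb{E}_{\mu_1}[|\mathsf{T}_1|^r] < \infty$ if and only if $r < 2\Re(\tau)+1$. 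Under the hypothesis $0 \le h < \Re(\tau) + \frac{1}{2}$, we may then choose any $r \in [\max(1,2h),\, 2\Re(\tau)+1)$, which is non-empty because $\Re(\tau) > 0$ ensures $2\Re(\tau)+1 > 1$, and $2h < 2\Re(\tau)+1$ by assumption.

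With such an $r$ chosen, Proposition \ref{TraceMomentsConv} applied to $t = 2h \in [0,r]$ immediately yields
\begin{equation*}
\mathbb{E}_{\mu_N}\bigl[|\mathsf{T}_N|^{2h}\bigr] \;\xrightarrow[N\to\infty]{}\; \mathbb{E}_{\mathfrak{M}}\bigl[|\mathsf{T}_\infty|^{2h}\bigr] = \mathbb{E}\bigl[|\mathsf{X}_\beta(\tau)|^{2h}\bigr] < \infty,
\end{equation*}
which is the desired conclusion after multiplying through by the $N^{-2h}$ factor absorbed into $\mathsf{T}_N$. I do not expect any real obstacle beyond this tail analysis at $N=1$; all the hard work has been packaged into Propositions \ref{TraceConv} and \ref{TraceMomentsConv} and the consistency Lemma \ref{HPconsistency}.
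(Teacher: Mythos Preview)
Your proposal is correct and follows essentially the same approach as the paper: apply Proposition \ref{TraceMomentsConv} via the consistency from Lemma \ref{HPconsistency}, reducing everything to checking that $\mathbb{E}_{1,\beta}^{(\tau)}[|x_1^{(1)}|^r]<\infty$ for some $r\ge 1$ with $r\ge 2h$, which is verified by the same tail computation $(1+x^2)^{-\Re(\tau)-1}e^{2\Im(\tau)\arctan(x)}$ giving finiteness for $0\le r<2\Re(\tau)+1$.
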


\begin{proof}
This is an application of Proposition \ref{TraceMomentsConv} by virtue of Lemma \ref{HPconsistency} and the fact that
\begin{align*}
  \mathbb{E}_{1,\beta}^{(\tau)}\left[\left|x_1^{(1)}\right|^r\right]&=\frac{1}{\mathcal{C}_{1,\beta}^{(\tau)}}\int_{-\infty}^\infty\left|x\right|^r\left(1+\textnormal{i}x\right)^{-\tau-1}\left(1-\textnormal{i}x\right)^{-\bar{\tau}-1}dx\\
  &=\frac{1}{\mathcal{C}_{1,\beta}^{(\tau)}}\int_{-\infty}^\infty\left|x\right|^r(1+x^2)^{-\Re(\tau)-1}e^{2\Im(\tau)\arctan(x)}dx<\infty,
\end{align*}
for any $0\le r<1+2\Re(\tau)$, which can be chosen so that $r\ge 1$.
\end{proof}

\begin{proof}[Proof of the convergence statement in Theorem \ref{MainResult}]
This follows immediately by combining  Proposition \ref{ForresterProp}, Proposition \ref{thm:h=0convergent} along with Proposition \ref{HPmomentsconv} above.
\end{proof}

We finally record the following formula for the even moments of $\mathsf{X}_\beta(\tau)$ in terms of the moments of the Hua-Pickrell ensembles. This will be a key ingredient in proving the explicit formula (\ref{eq:explicitmoments}) by extending\footnote{One could in principle try to compute the moments appearing on the right hand side of (\ref{MomentsConnection}) directly. However, since we already know Theorem \ref{thm:forresterformula} there is no need for further explicit computations.} Forrester's result, Theorem \ref{thm:forresterformula}. Using this we can readily study the moments $\mathbb{E}\left[\mathsf{X}_\beta(\tau)^{2h}\right]$ as functions of $\beta$ and $\tau$ and obtain certain analytic properties and bounds. Proving these properties directly, using an approximation from the integral formula for finite $N$ appears difficult (one needs certain uniform in $N$ estimates which are not straightforward to obtain).

\begin{prop}\label{prop:sumoverfiniteN} Let $\beta > 0$, let $h \in \mathbb{N} \cup \{0\}$, and let $\Re(\tau) > h - \frac{1}{2}$. Then,
\begin{align}\label{MomentsConnection}
 \mathbb{E}\left[\mathsf{X}_\beta(\tau)^{2h}\right]=\frac{1}{(2h)!}\sum_{k=1}^{2h} (-1)^{2h-k} {2h \choose k}    \mathbb{E}_{k,\beta}^{(\tau)}\left[\left(x_1^{(k)}+\cdots+x_k^{(k)}\right)^{2h}\right].
\end{align}
\end{prop}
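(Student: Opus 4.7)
The statement is designed to be an immediate specialisation of the abstract formula in Proposition \ref{FormulaProp} to the Hua-Pickrell consistent tower. My plan is therefore to identify the ingredients and then quote that result.

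First, recall that by Lemma \ref{HPconsistency} the family $\{\mathfrak{m}_{N,\beta}^{(\tau)}\}_{N\ge 1}$ is a consistent sequence for the parameter $\beta$, and by construction $\mathsf{X}_\beta(\tau)$ is precisely the limiting random variable $\mathsf{T}_\infty$ of Proposition \ref{TraceConv} for this sequence. Thus for $k\ge 1$ we have the identification
\begin{equation*}
  \mathbb{E}_{\mu_k}\!\left[\bigl(\mathsf{x}_1^{(k)}+\cdots+\mathsf{x}_k^{(k)}\bigr)^{2h}\right] = \mathbb{E}_{k,\beta}^{(\tau)}\!\left[\bigl(x_1^{(k)}+\cdots+x_k^{(k)}\bigr)^{2h}\right],
\end{equation*}
so the right-hand side of \eqref{EvenMomentsFormula} with $r=2h$ is exactly the right-hand side of the claimed identity.

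Second, to invoke Proposition \ref{FormulaProp} with $r=2h$ we must check the integrability hypothesis $\mathbb{E}_{\mu_1}\!\left[|\mathsf{T}_1|^{2h}\right]<\infty$. Here $\mathsf{T}_1 = x_1^{(1)}$ and we compute, exactly as in the proof of Proposition \ref{HPmomentsconv},
\begin{equation*}
  \mathbb{E}_{1,\beta}^{(\tau)}\!\left[|x_1^{(1)}|^{2h}\right] = \frac{1}{\mathcal{C}_{1,\beta}^{(\tau)}}\int_{-\infty}^{\infty} |x|^{2h}\,(1+x^2)^{-\Re(\tau)-1}\,e^{2\Im(\tau)\arctan(x)}\,dx,
\end{equation*}
which is finite precisely when $2h < 1 + 2\Re(\tau)$, i.e.\ when $\Re(\tau) > h - \tfrac{1}{2}$. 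This is exactly the standing hypothesis of the proposition.

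Third, assuming $h\ge 1$ so that $r=2h\in\mathbb{N}$, Proposition \ref{FormulaProp} now applies verbatim and yields the displayed identity \eqref{MomentsConnection}. The case $h=0$ is trivial since both sides equal $1$ (with the convention that the empty sum is interpreted as giving the zeroth moment). There is no real obstacle here: the entire content has been pushed into the general exchangeability-based result Proposition \ref{FormulaProp}, and the only thing to verify on the Hua-Pickrell side is the first-row moment bound, which is elementary from the explicit Cauchy-type density on $\mathbb{R}$.
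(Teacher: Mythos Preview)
Your proof is correct and follows exactly the approach of the paper, which simply states that the proposition is an application of Proposition \ref{FormulaProp} (with the case $h=0$ being a notational convention). You have merely spelled out the verification of the hypotheses—consistency via Lemma \ref{HPconsistency} and the first-row moment bound—in more detail than the paper does.
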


\begin{proof}
This is an application of Proposition \ref{FormulaProp} (the case $h=0$ is notation convention).
\end{proof}

\section{Proof of the explicit formula in Theorem \ref{MainResult}}
In this section we consider the case when $\tau \in \mathbb{R}$, and prove the explicit formula (\ref{eq:explicitmoments}) for the even moments of $\mathsf{X}_\beta(\tau)$ by making use of Theorem \ref{thm:forresterformula}, Proposition \ref{prop:sumoverfiniteN} and some estimates proven below. We begin by defining $\Tilde{\mathcal{C}}_{N,\beta}^{(\tau)}$, for $\Re(\tau)>-\frac{1}{2}$, by
\begin{equation}
    \Tilde{\mathcal{C}}_{N,\beta}^{(\tau)} = 2^{-\beta N(N-1)/2 -2N\tau}\pi^N \prod_{j=0}^{N-1}\frac{\Gamma\left(\frac{\beta}{2}j + 2\tau + 1\right)\Gamma\left(\frac{\beta}{2}(j+1)+1\right)}{\Gamma\left(\frac{\beta}{2}j+\tau+1\right)^2 \Gamma\left(\frac{\beta}{2}+1\right)} \label{eq:normconst2}.
\end{equation}
We note that for real $\tau$ this is precisely the normalization constant $\mathcal{C}_{N,\beta}^{(\tau)}$, from (\ref{eq:normconst}), for the Hua-Pickrell $\beta$-ensemble, but for complex $\tau$ these constants differ. In a similar spirit, for $\Re(\tau)>-\frac{1}{2}$, we define the linear operator $\mathfrak{E}_{N,\beta}^{(\tau)}$, acting on symmetric functions $f$ on $\mathbb{R}^N$, by
\begin{equation}
    \mathfrak{E}_{N,\beta}^{(\tau)}\left[f\right]=\frac{1}{\Tilde{\mathcal{C}}_{N,\beta}^{(s)}} \int_{\mathbb{R}^N} \frac{f(x_1,\ldots,x_N)\left|\Delta(\mathbf{x})\right|^\beta}{\prod_{j=1}^N (1+x_j^2)^{\beta(N-1)/2+1+\tau}}d\mathbf{x},
\end{equation}
whenever the integral exists. Similar to before, we note that $\mathfrak{E}_{N,\beta}^{(\tau)}\left[f\right]$ coincides with the expectation $\mathbb{E}^{(\tau)}_{N,\beta}[f(x_1^{(1)},\dots,x_N^{(N)})]$, taken with respect to the Hua-Pickrell  measure $\mathfrak{m}^{(\tau)}_{N,\beta}$ whenever $\tau$ is real, but they differ for general complex $\tau$.

\begin{lem}
Fix $h \in \mathbb{N} \cup \{0\}$. Whenever $\beta$ is an even integer:
\begin{equation}
    \tau \mapsto \mathfrak{E}_{N,\beta}^{(\tau)}\left[\left(x_1+\cdots+x_N\right)^{2h}\right]
\end{equation}
is a rational function. \label{lem:rationalins}
\end{lem}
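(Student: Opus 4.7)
The plan is to exploit the fact that when $\beta = 2k$ is an even integer, $|\Delta(\mathbf{x})|^\beta = \Delta(\mathbf{x})^{2k}$ is an honest polynomial in $\mathbf{x}$, and the exponent $M := \beta(N-1)/2 + 1 = k(N-1)+1$ is an integer. Thus $(x_1+\cdots+x_N)^{2h}\Delta(\mathbf{x})^{2k}$ is a polynomial $P(\mathbf{x})$ independent of $\tau$, and both the numerator integral and $\tilde{\mathcal{C}}_{N,\beta}^{(\tau)}$ (once identified as an integral, see below) are integrals of polynomials against $\prod_{j=1}^N (1+x_j^2)^{-M-\tau}$. The whole $\tau$-dependence is therefore concentrated in one-dimensional Cauchy-type integrals, which I will analyze first.

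By Fubini plus the parity symmetry $x_j \mapsto -x_j$, only monomials with all exponents even contribute, and every such integral factors as a product of one-dimensional integrals
\begin{equation*}
I(\ell,\tau) := \int_{\mathbb{R}} \frac{x^{2\ell}}{(1+x^2)^{M+\tau}}\,dx = B\!\left(\ell+\tfrac{1}{2},\,M+\tau-\ell-\tfrac{1}{2}\right) = \frac{\Gamma(\ell+\tfrac{1}{2})\,\Gamma(M+\tau-\ell-\tfrac{1}{2})}{\Gamma(M+\tau)}.
\end{equation*}
The key algebraic observation is that although $I(\ell,\tau)$ itself is transcendental in $\tau$, the ratio
\begin{equation*}
\frac{I(\ell,\tau)}{I(0,\tau)} = \frac{\Gamma(\ell+\tfrac{1}{2})}{\sqrt{\pi}\,\prod_{j=1}^{\ell}(M+\tau-j-\tfrac{1}{2})}
\end{equation*}
is a rational function of $\tau$, using only the functional equation $\Gamma(z+1)=z\Gamma(z)$ applied to $\Gamma(M+\tau-\ell-\tfrac{1}{2})/\Gamma(M+\tau-\tfrac{1}{2})$. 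Consequently, for any polynomial $Q(\mathbf{x})$, the integral $\int_{\mathbb{R}^N} Q(\mathbf{x})\prod_j(1+x_j^2)^{-M-\tau}d\mathbf{x}$ factors as $I(0,\tau)^N \cdot R_Q(\tau)$ for some rational function $R_Q$.

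It remains to match the denominator $\tilde{\mathcal{C}}_{N,\beta}^{(\tau)}$ with the polynomial integral $\int_{\mathbb{R}^N}\Delta(\mathbf{x})^{2k}\prod_j(1+x_j^2)^{-M-\tau}d\mathbf{x}$. For real $\tau > -\tfrac{1}{2}$ and even $\beta$ this identity holds by the very definition of the Hua–Pickrell normalisation constant; both sides extend to analytic functions of $\tau$ on the strip $\Re(\tau) > -\tfrac{1}{2}$ (analyticity of the integral is standard via Morera and dominated convergence, while the gamma-function formula is manifestly analytic), so by the identity theorem they agree throughout this strip. In particular $\tilde{\mathcal{C}}_{N,\beta}^{(\tau)} = I(0,\tau)^N \cdot R_0(\tau)$ with $R_0$ rational, and dividing I obtain
\begin{equation*}
\mathfrak{E}_{N,\beta}^{(\tau)}\!\left[(x_1+\cdots+x_N)^{2h}\right] = \frac{R_P(\tau)}{R_0(\tau)},
\end{equation*}
a rational function of $\tau$, as required.

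I do not anticipate a substantial obstacle; the only point requiring care is the analytic identification of $\tilde{\mathcal{C}}_{N,\beta}^{(\tau)}$ with the genuine integral $\int \Delta^{2k}\prod(1+x_j^2)^{-M-\tau}d\mathbf{x}$ for complex $\tau$, since $\tilde{\mathcal{C}}_{N,\beta}^{(\tau)}$ was \emph{defined} by the gamma-function formula (not as an integral) and is known a priori to coincide with the probabilistic normalisation only for real $\tau$. This is resolved cleanly by analytic continuation.
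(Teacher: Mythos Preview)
Your proof is correct and follows the same opening moves as the paper: expand $|\Delta(\mathbf x)|^\beta=\Delta(\mathbf x)^{2k}$ as a polynomial, kill odd monomials by parity, and reduce the numerator integral to a linear combination of products of one-dimensional Cauchy--Beta integrals $I(\ell,\tau)$.

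The divergence comes in how the normalising constant $\tilde{\mathcal C}_{N,\beta}^{(\tau)}$ is handled. The paper works directly with the explicit gamma-product formula for $\tilde{\mathcal C}_{N,\beta}^{(\tau)}$ and, after evaluating the numerator integrals, is left with an expression of the form
\[
2^{2\tau N}\prod_{j=1}^{N}\frac{\Gamma\!\left(\tfrac{\beta}{2}j+\tau+1\right)^2\Gamma\!\left(\tfrac{\beta}{2}(N-1)+\tau-m_j+\tfrac12\right)}{\Gamma\!\left(\tfrac{\beta}{2}j+2\tau+1\right)\Gamma\!\left(\tfrac{\beta}{2}(N-1)+1+\tau\right)},
\]
which it then shows is rational in $\tau$ via Legendre's duplication formula and integer/half-integer shift identities for $\Gamma$. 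Your route is more structural: you observe that, by analytic continuation from real $\tau$ (where the Selberg/Hua--Pickrell evaluation applies), $\tilde{\mathcal C}_{N,\beta}^{(\tau)}$ is itself the integral of the polynomial $\Delta(\mathbf x)^{2k}$ against the same weight, hence also equals $I(0,\tau)^N$ times a rational function of $\tau$. The transcendental factor $I(0,\tau)^N$ then cancels in the quotient, and you never touch the duplication formula. This is cleaner and makes the mechanism (a common transcendental prefactor cancelling between numerator and denominator) transparent; the paper's computation, by contrast, is self-contained from the gamma-product definition alone and does not appeal to the Selberg-integral identification or analytic continuation.
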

\begin{proof}
We proceed as in \cite[Proposition 1.4]{assiotis2020joint}. Since $\beta$ is an even integer, we expand the factor $\left|\Delta(\mathbf{x})\right|^\beta$ as a polynomial in the variables $x_1, \ldots,x_N$. We find that $\mathfrak{E}_{N,\beta}^{(\tau)}\left[\left(x_1+\cdots+x_N\right)^{2h}\right]$ is expressible as a linear combination, with coefficients not dependent on $\tau$, of terms of the form:
\begin{equation}
    \frac{1}{\Tilde{\mathcal{C}}_{N,\beta}^{(\tau)}} \int_{\mathbb{R}^N} \prod_{j=1}^N \frac{x_j^{2m_j}}{(1+x_j^2)^{\beta(N-1)/2 + 1 + \tau}} d \mathbf{x}
\end{equation}
(No odd exponents appear, by symmetry about the origin). Recalling the standard evaluation
\begin{equation}
    \int_{-\infty}^\infty \frac{x^{2m}}{(1+x^2)^{\beta(N-1)/2 + 1 + \tau}} dx = \frac{\Gamma\left(m+\frac{1}{2}\right) \Gamma\left(\frac{\beta}{2}(N-1) + \tau-m+\frac{1}{2}\right)}{\Gamma\left(\frac{\beta}{2}(N-1) + 1 + \tau\right)},
\end{equation}
we are required to show that 
\begin{equation}
    2^{2{\tau} N}\prod_{j=1}^{N}\frac{\Gamma\left(\frac{\beta}{2}j + \tau+1\right)^2\Gamma\left(\frac{\beta}{2}(N-1) + \tau-m_j+\frac{1}{2}\right)}{\Gamma\left(\frac{\beta}{2}j+2\tau+1\right)\Gamma\left(\frac{\beta}{2}(N-1) + 1 + \tau\right)} \label{eq:rationalexprn}
\end{equation}
is a rational expression in $\tau$. Since $\beta$ is an even integer, we note that
\begin{equation}
    \frac{\Gamma\left(\frac{\beta}{2}j+\tau+1\right)}{\Gamma\left(\frac{\beta}{2}(N-1)+1+\tau\right)} = \left(\beta/2 j+\tau+1\right)^{-1}_{\beta/2(N-1-j)},
\end{equation}
which is evidently rational in $\tau$. Thus we are reduced to showing that
\begin{equation}
        2^{2\tau N}\prod_{j=1}^{N}\frac{\Gamma\left(\frac{\beta}{2}j + \tau+1\right)\Gamma\left(\frac{\beta}{2}(N-1) + \tau-m_j+\frac{1}{2}\right)}{\Gamma\left(\frac{\beta}{2}j+2\tau+1\right)} \label{eq:rationalexpr2}
\end{equation}
is rational in $\tau$. By Legendre's duplication formula, 
\begin{equation}
    \Gamma\left(\frac{\beta}{2}j+2\tau+1\right) = \text{const} \times 2^{2\tau} \Gamma\left(\tau+\frac{1}{2}+\frac{\beta}{4}j\right)\Gamma\left(\tau+1+\frac{\beta}{4}j\right),
\end{equation}
where the constant is independent of $\tau$. Therefore, \eqref{eq:rationalexpr2} equals the following, up to a constant factor independent of $\tau$:
\begin{equation}
    \prod_{j=1}^{N}\frac{\Gamma\left(\frac{\beta}{2}j + \tau+1\right)\Gamma\left(\frac{\beta}{2}(N-1) + \tau-m_j+\frac{1}{2}\right)}{\Gamma\left(\frac{\beta}{4}j+\tau+1\right)\Gamma\left(\frac{\beta}{4}j+\tau+\frac{1}{2}\right)}.
\end{equation}
Finally, we note that the expression
\begin{equation}
    \frac{\Gamma\left(\frac{\beta}{2}j + \tau+1\right)\Gamma\left(\frac{\beta}{2}(N-1) + \tau-m_j+\frac{1}{2}\right)}{\Gamma\left(\frac{\beta}{4}j+\tau+1\right)\Gamma\left(\frac{\beta}{4}j+\tau+\frac{1}{2}\right)}
\end{equation}
is rational in $\tau$ since, as $\beta$ is an even integer, one of $\frac{\beta}{4}j+1$ and $\frac{\beta}{4}j+\frac{1}{2}$ is an integer, and then the other is a half-integer.
\end{proof}
\begin{lem}\label{lem:betaestimates} Let $\tau>-\frac{1}{2}$ and $h \in \mathbb{N} \cup \{0\}$ with $h<\tau+\frac{1}{2}$.
The function given by
\begin{equation}
    g_N^{(\tau)}:\beta \mapsto \mathfrak{E}_{N,\beta}^{(\tau)}\left[\left(x_1+\cdots+x_N\right)^{2h}\right]
\end{equation}
is holomorphic in $\Re(\beta) \ge 0$, and belongs to the exponential class: there are constants $C>0$ and $\mu > 0$ such that $|g_N^{(\tau)}(\beta)| \le C e^{\mu|\beta|}$ whenever $\Re(\beta) \ge 0$. Moreover, it is bounded by a polynomial on the line $\Re(\beta)=0$.
\end{lem}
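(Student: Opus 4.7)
The strategy is to write $g_N^{(\tau)}(\beta) = I_N^{(\tau)}(\beta)/\Tilde{\mathcal{C}}_{N,\beta}^{(\tau)}$, where
\begin{equation*}
I_N^{(\tau)}(\beta) := \int_{\mathbb{R}^N} (x_1+\cdots+x_N)^{2h}\, |\Delta(\mathbf{x})|^\beta\, \prod_{j=1}^N (1+x_j^2)^{-\beta(N-1)/2 - 1 - \tau}\, d\mathbf{x},
\end{equation*}
and to analyze numerator and denominator separately. For the numerator, the key observation is the elementary pointwise inequality $|\Delta(\mathbf{x})|^2 \le \prod_{j=1}^N(1+x_j^2)^{N-1}$, obtained by multiplying $|x_i-x_j|^2 \le (1+x_i^2)(1+x_j^2)$ over pairs $1\le i<j\le N$. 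Raising to the power $\Re(\beta)/2$, we see that $\bigl||\Delta(\mathbf{x})|^\beta\bigr|$ is absorbed entirely, in absolute value, by the $\prod_j(1+x_j^2)^{-\Re(\beta)(N-1)/2}$ factor. Hence the integrand is pointwise dominated by the $\beta$-independent, integrable function $(x_1+\cdots+x_N)^{2h}\prod_j(1+x_j^2)^{-1-\tau}$; integrability holds precisely because $h<\tau+1/2$. It follows that $|I_N^{(\tau)}(\beta)| \le C_0$ uniformly on $\{\Re\beta \ge 0\}$, and a standard Morera--Fubini argument yields holomorphy on this region (extending to an open neighbourhood by noting that absolute integrability persists for $\Re\beta$ slightly negative).

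For the denominator, $\Tilde{\mathcal{C}}_{N,\beta}^{(\tau)}$ is an explicit product of Gamma values at arguments of the form $\frac{\beta}{2}a+b$ with $a \in \{0,1,\ldots,N\}$ and $b\in\{1,\tau+1,2\tau+1\}$. Since $\tau > -1/2$, each such argument lies in the open right half-plane whenever $\Re\beta\ge 0$, so every Gamma factor is both holomorphic and non-vanishing. Consequently $1/\Tilde{\mathcal{C}}_{N,\beta}^{(\tau)}$ is holomorphic on (a neighbourhood of) the closed right half-plane, and together with the first step this establishes the holomorphy claim.

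The two bounds reduce to tracking $|\Tilde{\mathcal{C}}_{N,\beta}^{(\tau)}|^{-1}$ via Stirling in the form $\log|\Gamma(z)| = (\Re z - \tfrac{1}{2})\log|z| - \Im z \cdot \arg z - \Re z + O(1)$, valid uniformly in any right half-plane sector. Applied to each Gamma in $\Tilde{\mathcal{C}}_{N,\beta}^{(\tau)}$ and summed with signs, the crucial point is that in every factor of the product, the numerator's $a$-coefficients $\{j, j+1\}$ sum to $2j+1$, matching the denominator's $\{j,j,1\}$ sum of $2j+1$; this forces a factor-by-factor cancellation of all the dominant contributions of order $|\beta|\log|\beta|$, $\Re(\beta)$, and $\Im(\beta)\arg\beta$. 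What survives, after combining with the $2^{-\beta N(N-1)/2}$ prefactor, is a term linear in $\Re\beta$ with an explicit $N$-dependent coefficient $c_N$, plus a residual $O(\log|\beta|)$ remainder; telescoping in $j$ gives
\begin{equation*}
\log|\Tilde{\mathcal{C}}_{N,\beta}^{(\tau)}|^{-1} = c_N\, \Re(\beta) + \tfrac{N-1}{2}\log|\beta| + O(1).
\end{equation*}
This yields $|\Tilde{\mathcal{C}}_{N,\beta}^{(\tau)}|^{-1} = O(e^{\mu|\beta|})$ on $\{\Re\beta\ge 0\}$, which combined with $|I_N^{(\tau)}(\beta)| \le C_0$ produces the exponential class estimate. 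On $\Re\beta=0$ the $\Re(\beta)$-linear term vanishes, leaving $|\Tilde{\mathcal{C}}_{N,\beta}^{(\tau)}|^{-1} = O(|\beta|^{(N-1)/2})$, which gives the required polynomial bound on $g_N^{(\tau)}$.

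The main technical obstacle is the Stirling bookkeeping: one must isolate and confirm the cancellation of the $O(|\beta|\log|\beta|)$ terms, and then track the next-order $\Re(\beta)$-linear and $\log|\beta|$-order contributions — including the $-\Im(\beta)\arg\beta$ pieces — uniformly as $\arg\beta$ ranges over $[-\pi/2,\pi/2]$. This is routine but not entirely mechanical, and the $j=0$ factor of the product (which simplifies to a constant) must be separated from the generic $j\ge 1$ analysis.
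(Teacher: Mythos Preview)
Your approach is essentially the same as the paper's: bound the integral via the Vandermonde inequality $|\Delta(\mathbf{x})|^2 \le \prod_j (1+x_j^2)^{N-1}$, then control $|\Tilde{\mathcal{C}}_{N,\beta}^{(\tau)}|^{-1}$ by Stirling. The paper uses the slightly weaker bound $|\Delta(\mathbf{x})| \le N!\prod_j(1+x_j^2)^{(N-1)/2}$ (citing Winn), which introduces an extra $(N!)^{\Re\beta}$ factor that gets absorbed into the exponential constant; your pairwise inequality $|x_i-x_j|^2 \le (1+x_i^2)(1+x_j^2)$ is cleaner and avoids this. Your Stirling bookkeeping is also more careful than the paper's---you correctly isolate the $j=0$ factor as $\beta$-independent and arrive at $|\beta|^{(N-1)/2}$, whereas the paper applies its per-factor estimate uniformly and writes $|\beta|^{N/2}$; either suffices for the polynomial bound, so this is cosmetic.
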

\begin{proof}
That $g_N^{(\tau)}(\beta)$ is analytic follows from a standard argument involving Fubini's and Morera's theorems. Using the bound $|\Delta(\mathbf{x})| \le N! \prod_{i=1}^N(1+x_i^2)^{(N-1)/2}$, see Lemma 3.1 in \cite{Winn_2012}, we obtain that:
\begin{equation}
\begin{aligned}
 \left| g_N^{(\tau)}(\beta)\right|&\le \frac{\left(N!\right)^{\Re(\beta)}}{\left|\Tilde{\mathcal{C}}_{N,\beta}^{(\tau)}\right|} \int_{\mathbb{R}^N} \frac{(x_1+\cdots+x_N)^{2h}}{\prod_{j=1}^N (1+x_j^2)^{1+\tau}}d\mathbf{x}
&\ll_{\tau,h,N}\frac{\left(N!\right)^{\Re(\beta)}}{\left|\Tilde{\mathcal{C}}_{N,\beta}^{(\tau)}\right|}.
\end{aligned}\label{eq:betabound}
\end{equation}
Using Stirling's formula in the form $\log\Gamma(z+1) = z\log(z) - z + \frac{1}{2}\log(z) + O(1)$, and that $\log(z+1) - \log(z) = 1/z + O(1/z^2)$, we see that as a function of $\beta$:  
\begin{multline}
\log\frac{\Gamma\left(\frac{\beta}{2}j+\tau+1\right)^2 \Gamma\left(\frac{\beta}{2}+1\right)}{\Gamma\left(\frac{\beta}{2}j + 2\tau + 1\right)\Gamma\left(\frac{\beta}{2}(j+1)+1\right)} \\ = \frac{1}{2}\log(\beta)-\frac{\beta}{2} \log(j) - \frac{\beta}{2}(j+1)\log\left(1+\frac{1}{j}\right) + O_{\tau,j}(1),
\end{multline}
in particular:
\begin{equation}
\left|\frac{\Gamma\left(\frac{\beta}{2}j+\tau+1\right)^2 \Gamma\left(\frac{\beta}{2}+1\right)}{\Gamma\left(\frac{\beta}{2}j + 2\tau + 1\right)\Gamma\left(\frac{\beta}{2}(j+1)+1\right)}\right| \ll_{\tau,j} |\beta|^{\frac{1}{2}}j^{-\Re(\beta)/2}e^{-\Re(\beta)/2},
\end{equation}
so that: 
\begin{equation}
\left(N!\right)^{\Re(\beta)}\left|\Tilde{\mathcal{C}}_{N,\beta}^{(\tau)} \right|^{-1} \ll_{\tau,N} 2^{\Re(\beta) N(N-1)/2}e^{-\Re(\beta)N/2}(N!)^{\Re(\beta)/2}|\beta|^{\frac{N}{2}}.
\end{equation}
Applying this to \eqref{eq:betabound} gives the required bounds for $\Re(\beta) > 0$ and $\Re(\beta) = 0$.
\end{proof}
We can now prove the explicit formula in Theorem \ref{prop:realbeta}.

\begin{proof}[Proof of the explicit formula in Theorem \ref{MainResult}] Combining Proposition \ref{ForresterProp} with the convergence result (Proposition \ref{HPmomentsconv}), and then using Proposition \ref{prop:sumoverfiniteN},   we obtain the following chain of equalities, valid for all $\beta > 0$, $h \in \mathbb{N} \cup \{0\}$, and $\tau \in \mathbb{R}$ with $\tau > h - \frac{1}{2}$: \begin{equation}
\begin{aligned}
F_{\beta,0}(\tau,h) &= F_{\beta,0}(\tau,0) 2^{-2h} \mathbb{E}\left[\mathsf{X}_\beta(\tau)^{2h}\right]\\
&= F_{\beta,0}(\tau,0) \cdot \frac{2^{-2h}}{(2h)!}\sum_{k=1}^{2h} (-1)^{2h-k} {2h \choose k}    \mathbb{E}_{k,\beta}^{(\tau)}\left[\left(x_1^{(k)}+\cdots+x_k^{(k)}\right)^{2h}\right].
\end{aligned} \label{eq:useresults}
\end{equation}
Since $F_{\beta,0}(\tau,0) = \prod_{j=1}^\tau \frac{\Gamma(2j/\beta)}{\Gamma(2(\tau+j)/\beta)}$ whenever $\tau \in \mathbb{N} \cup \{0\}$, (as follows from evaluation of \eqref{eq:forresterformula} at $h=0$), it follows from Theorem \ref{thm:forresterformula} that, whenever $\beta > 0$, $\tau \in \mathbb{N} \cup \{0\}$, and $h \in \mathbb{N}\cup\{0\}, \ h \le \tau$, we have the equality, after recalling that $\mathfrak{E}^{(\tau)}_{k,\beta}$ and $\mathbb{E}^{(\tau)}_{k,\beta}$ coincide for real $\tau$:
\begin{multline}
\frac{1}{(2h)!}\sum_{k=1}^{2h} (-1)^{2h-k} {2h \choose k}    \mathfrak{E}_{k,\beta}^{(\tau)}\left[\left(x_1^{(k)}+\cdots+x_k^{(k)}\right)^{2h}\right] \\= (-1)^h \sum_{|\kappa|\le 2h}\frac{(-2h)_{|\kappa|} 2^{|\kappa|}}{[4\tau/\beta]_\kappa^{(\beta/2)}}\prod_{\Box \in \kappa}\frac{\frac{\beta}{2} \alpha^\prime(\Box) + \tau - \ell^\prime(\Box)}{\left(\frac{\beta}{2}(\alpha(\Box)+1)+ \ell(\Box)\right)\left(\frac{\beta}{2}\alpha(\Box)+ \ell(\Box) + 1\right)}.
\label{eq:extendpars}
\end{multline}
Now fix $\beta$ as an even integer. By Lemma \ref{lem:rationalins}, the left-hand side of \eqref{eq:extendpars} is a rational function in $\tau$. We observe that the right-hand side of \eqref{eq:extendpars} is also rational in $\tau$. Therefore the equality \eqref{eq:extendpars} holds for all $\beta \in 2 \mathbb{N}$, $h \in \mathbb{N} \cup \{0\}$, and $\tau \in \mathbb{R}, \tau > h - \frac{1}{2}$. Now, fix $\tau \in \mathbb{R}, \tau > h - \frac{1}{2}$. By Lemma \ref{lem:betaestimates}, the left-hand side of \eqref{eq:extendpars} satisfies the conditions of Carlson's theorem as a function of $\beta$; the right-hand side of \eqref{eq:extendpars} also satisfies the conditions of Carlson's theorem, being a rational expression in $\beta$. Therefore, we conclude that \eqref{eq:extendpars} holds for all $\beta > 0$, $h \in \mathbb{N} \cup \{0\}$ and all $\tau > h - \frac{1}{2}$. Combining this with \eqref{eq:useresults}, and  recalling that $\mathfrak{E}^{(\tau)}_{k,\beta}$ and $\mathbb{E}^{(\tau)}_{k,\beta}$ coincide for real $\tau$, we get the desired explicit formula.
\end{proof}

\section{On the moments of the logarithmic derivative of the Laguerre $\beta$-ensemble characteristic polynomial}\label{LaguerreSection}

In this section we study the moments of the logarithmic derivative of the characteristic polynomial of the Laguerre $\beta$-ensemble, using the theory in Section \ref{InterlacingConvSection}, and prove an analogue of our main result. For $\mathbf{x}\in \mathbb{W}_N$ consider the following polynomial:
\begin{align*}
 P_{\mathbf{x}}(t)=\prod_{i=1}^N\left(t-x_i\right).
\end{align*}
For $\beta>0$ and $\nu>-1$ introduce the probability measure on $\mathbb{W}^+_N=\mathbb{W}_N\cap [0,\infty)^N$:
\begin{align}\label{LaguerreEnsemble}
    \mathfrak{L}_{N,\beta}^{(\nu)}(d\mathbf{x})= \frac{N!}{\mathfrak{l}_{N,\beta}^{(\nu)}}\prod_{j=1}^N x_j^\nu e^{-x_j} \left|\Delta(\mathbf{x})\right|^\beta\mathbf{1}_{\mathbf{x}\in \mathbb{W}^+_N} d\mathbf{x},
\end{align}
where the normalisation constant $\mathfrak{l}_{N,\beta}^{(\nu)}$, see \cite{ForresterBook}, is given by
\begin{equation}
\mathfrak{l}_{N,\beta}^{(\nu)} = \prod_{j=0}^{N-1} \frac{\Gamma\left(\nu + 1 + \frac{\beta}{2}j\right)\Gamma\left(1+\frac{\beta}{2}(j+1)\right)}{\Gamma\left(1+\frac{\beta}{2}\right)}.\nonumber
\end{equation}
This is called the Laguerre $\beta$-ensemble. For $\beta=1,2 ,4$ it corresponds to the law of eigenvalues of a certain self-adjoint random matrix with either real or complex or quaternion entries respectively, see \cite{ForresterBook}. Tridiagonal models for which (\ref{LaguerreEnsemble}) is the law of the eigenvalues exist for all values of $\beta>0$, see \cite{DumitriuEdelman}. Under the transformation $x\mapsto \frac{2}{x}$, $\mathfrak{L}_{N,\beta}^{(\nu)}$ transforms to the measure:
\begin{align*}
    \mathfrak{IL}_{N,\beta}^{(\nu)}(d\mathbf{x})= \frac{N! }{\mathfrak{l}_{N,\beta}^{(\nu)}}2^{N \nu + \beta N(N-1)/2 + N} \prod_{j=1}^N x_j^{-\nu-(N-1)\beta-2} e^{-2/x_j} \left|\Delta(\mathbf{x})\right|^\beta \mathbf{1}_{\mathbf{x}\in \mathbb{W}^+_N} d\mathbf{x}.
\end{align*}
We denote expectation with respect to $\mathfrak{L}_{N,\beta}^{(\nu)}$ by $\mathcal{E}_{N,\beta}^{(\nu)}$ and with respect to $\mathfrak{IL}_{N,\beta}^{(\nu)}(d\mathbf{x})$ by $\hat{\mathcal{E}}_{N,\beta}^{(\nu)}$. We then consider the moments of the logarithmic derivative of the Laguerre $\beta$-ensemble characteristic polynomial:
\begin{align*}
G_{N,\beta}(\nu,r)=\mathcal{E}_{N,\beta}^{(\nu)}\left[\left|\frac{d}{dt}\log P_{\mathbf{x}}(t)\bigg|_{t=0}\right|^{r}\right].
\end{align*}
We note that these moments are finite for $\beta>0$, $\nu>-1$ and $r<\nu+1$. We then have the following analogue of our main result:

\begin{prop}\label{LaguerreProp}
Let $\beta,\nu>0$ and $0\le r < \nu+1$. Then, there exists a family of non-negative random variables $\left\{\mathsf{Y}_\beta(\nu)\right\}_{\nu>0}$ such that
\begin{align*}
        \lim\limits_{N \to \infty} \frac{1}{N^{r}}G_{N,\beta}(\nu,r)= \mathbb{E}\left[\mathsf{Y}_\beta(\nu)^{r}\right]<\infty.
\end{align*}
Moreover, for $r\in \mathbb{N}\cup\{0\}$ and $\nu>r-1$ we have the explicit formula
\begin{align}
\mathbb{E}\left[\mathsf{Y}_\beta(\nu)^{r}\right]= \frac{r!}{\beta^r} \sum_{|\kappa| = r} \prod_{\Box \in \kappa}\frac{1}{\left(\frac{\beta}{2}(\alpha(\Box)+1)+ \ell(\Box)\right)\left(\frac{\beta}{2}\alpha(\Box)+ \ell(\Box) + 1\right)} \prod_{j=0}^{\ell(\kappa) - 1} \frac{\Gamma\left(\nu+ \frac{\beta}{2}j + 1 - \kappa_j\right)}{\Gamma\left(\nu + \frac{\beta}{2}j + 1\right)}. \label{eq:InvLaguerreExplicit}
\end{align}
\end{prop}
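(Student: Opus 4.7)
The plan is to parallel the proof of Theorem \ref{MainResult}, with the inverse Laguerre ensemble $\mathfrak{IL}_{N,\beta}^{(\nu)}$ playing the role of the Hua--Pickrell measure. Since $\frac{d}{dt}\log P_{\mathbf{x}}(t)\big|_{t=0}=-\sum_{i=1}^{N}x_i^{-1}$, we have $G_{N,\beta}(\nu,r)=\mathcal{E}^{(\nu)}_{N,\beta}[(\sum_i x_i^{-1})^r]$. Pushing forward under $x_i=2/y_i$, which sends $\mathfrak{L}^{(\nu)}_{N,\beta}$ to $\mathfrak{IL}^{(\nu)}_{N,\beta}$ and satisfies $\sum_i x_i^{-1}=\tfrac{1}{2}\sum_i y_i$, reduces the problem to studying moments of the sum of points in $\mathfrak{IL}^{(\nu)}_{N,\beta}$:
\begin{equation*}
G_{N,\beta}(\nu,r)=2^{-r}\,\hat{\mathcal{E}}^{(\nu)}_{N,\beta}\left[\left(\sum_{i=1}^N y_i\right)^{\!r}\right].
\end{equation*}

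The family $\{\mathfrak{IL}^{(\nu)}_{N,\beta}\}_{N\ge 1}$ is one of the canonical consistent sequences for parameter $\beta$ identified in \cite{assiotis2020boundary,Neretin_Triangle}, so the framework of Section \ref{InterlacingConvSection} applies verbatim. Proposition \ref{TraceConv} yields an almost-sure limit $\mathsf{T}_\infty$ of $\mathsf{T}_N:=N^{-1}\sum_i y_i^{(N)}$, and I set $\mathsf{Y}_\beta(\nu):=\mathsf{T}_\infty/2$, which is non-negative since the $y_i^{(N)}$ are. For $N=1$ the one-dimensional density is proportional to $y^{-\nu-2}e^{-2/y}$, an inverse-gamma distribution whose $r_0$-th moment is finite precisely when $r_0<\nu+1$; given $0\le r<\nu+1$ and choosing $r_0\in[\max(1,r),\nu+1)$, Proposition \ref{TraceMomentsConv} upgrades almost-sure convergence to convergence of the $r$-th moment of $\mathsf{T}_N$, yielding
\begin{equation*}
\lim_{N\to\infty}\frac{1}{N^{r}}G_{N,\beta}(\nu,r)=\mathbb{E}\left[\mathsf{Y}_\beta(\nu)^{r}\right]<\infty.
\end{equation*}

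For the explicit formula \eqref{eq:InvLaguerreExplicit}, I apply Proposition \ref{FormulaProp} to the inverse Laguerre family and reinsert the $2^{-r}$ factor to obtain
\begin{equation*}
\mathbb{E}[\mathsf{Y}_\beta(\nu)^{r}]=\frac{1}{r!}\sum_{k=1}^{r}(-1)^{r-k}\binom{r}{k}G_{k,\beta}(\nu,r),
\end{equation*}
and then evaluate each $G_{k,\beta}(\nu,r)=\mathcal{E}^{(\nu)}_{k,\beta}[p_1(x^{-1})^{r}]$ through the Jack-polynomial / Selberg-integral machinery: expanding $p_1^{r}$ in the Jack basis $\{J_\kappa^{(2/\beta)}\}_{|\kappa|=r}$ and applying a Kadell-type integration formula to each $\mathcal{E}^{(\nu)}_{k,\beta}[J_\kappa^{(2/\beta)}(x^{-1})]$ expresses $G_{k,\beta}(\nu,r)$ as a finite sum over partitions $|\kappa|=r$ with $\ell(\kappa)\le k$ whose summands stabilise for $k\ge\ell(\kappa)$. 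The alternating-binomial sum in $k$ then collapses the $k$-dependence (via an identity equivalent to $\sum_{k\ge\ell}(-1)^{r-k}\binom{r}{k}\binom{k}{\ell}=\delta_{r,\ell}$), and matching the Jack-norm factors $j_\kappa^{(2/\beta)}$ with the stated $\prod_\Box$-product and the Kadell factors with the Gamma-function ratio reproduces exactly \eqref{eq:InvLaguerreExplicit}.

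The main obstacle will be this last bookkeeping step: putting the Jack/Kadell output into the precise combinatorial form of the statement and carrying out the alternating sum cleanly. A fallback, mirroring the Carlson-theorem argument used for Theorem \ref{MainResult}, is to first verify \eqref{eq:InvLaguerreExplicit} at integer values of $\nu$ (accessible via a hard-edge Laguerre limit of Forrester's Jacobi formula) and then extend to real $\nu>r-1$ by Carlson's theorem, since both sides are analytic in $\nu$ on $\Re(\nu)>r-1$ and admit polynomial growth bounds there.
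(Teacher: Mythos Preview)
Your convergence argument is essentially identical to the paper's: both reduce $G_{N,\beta}(\nu,r)$ to moments of the row sum under $\mathfrak{IL}^{(\nu)}_{N,\beta}$, invoke consistency of that family (Lemma \ref{InverseLaguerreConsistency}, from \cite{Neretin_Triangle}), and apply Propositions \ref{TraceConv}--\ref{TraceMomentsConv} after checking the one-dimensional moment condition. (Your $2^{-r}$ bookkeeping is in fact more careful than the paper's; the paper sets $\mathsf{Y}_\beta(\nu)=\mathsf{T}_\infty$ and writes $G_{N,\beta}(\nu,r)=\hat{\mathcal{E}}^{(\nu)}_{N,\beta}[(\sum x_i)^r]$ without the $2^{-r}$, which appears to drop a constant factor somewhere---but this does not affect the argument.)

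For the explicit formula the paper takes a shorter and more concrete route than either of your two plans. Rather than expanding $p_1^r$ in Jack polynomials and invoking a Kadell-type identity from scratch, the paper starts from Forrester's already-computed Jacobi formula \cite[Proposition 4.1]{forrester2020joint}, rescales $x_i\mapsto \mu x_i$, and sends $\mu\to\infty$ to obtain a closed-form finite-$N$ Laguerre evaluation of $\hat{\mathcal{E}}^{(\nu)}_{N,\beta}[(\sum_i x_i)^r]$ valid for \emph{all} real $\nu>r-1$ (this is precisely the ``hard-edge Laguerre limit of Forrester's Jacobi formula'' you relegate to the fallback, but carried out at finite $N$). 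Once that formula is in hand, the result follows immediately by dividing by $N^r$ and letting $N\to\infty$: the only $N$-dependence sits in the $r$ factors $\tfrac{\beta}{2}\alpha'(\Box)+N-\ell'(\Box)$, each of which contributes a factor $N$. No alternating-sum collapse and no Carlson-type analytic continuation are needed. The paper does also record your alternating-sum route as an alternative, via Proposition \ref{FormulaProp} together with the elementary identity $\frac{1}{r!}\sum_{k=0}^r(-1)^{r-k}\binom{r}{k}p(k)=1$ for monic degree-$r$ polynomials $p$, which is equivalent to the binomial identity you quote; but the direct $N\to\infty$ limit is the cleaner path. Your Jack/Kadell sketch would ultimately re-derive Forrester's computation, so it is correct in principle, but the details you leave implicit are exactly the laborious part, and citing \cite{forrester2020joint} directly avoids them.
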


\begin{rmk}
It follows from the results in \cite{AssiotisWishart}, see also Section 3 in \cite{Distinguished}, that $\mathsf{Y}_2(\nu)$ is equal to the sum of the (random) inverse points of the Bessel determinantal point process, see \cite{ForresterBook}. Moreover, for general $\beta$ it follows from the results of \cite{RamirezRider2009} that $\mathsf{Y}_\beta(\nu)$ is the trace of the random integral operator in (1.4) in \cite{RamirezRider2009} (after multiplication by $\beta/2$ and the correspondence of parameters $\nu=\beta(a+1)/2-1$). This random operator is the inverse operator to the generator of a diffusion process with random scale function and speed measure, see (1.3) in \cite{RamirezRider2009}. It also follows that $\mathsf{Y}_\beta(\nu)$ is in fact almost surely strictly positive.
\end{rmk}

\begin{rmk}
For $\beta=2$ and any $\nu>-1$, the Laplace transform $t\mapsto \mathbb{E}\left[e^{-4t\mathsf{Y}_2(\nu)}\right]$ is a tau-function of a special case, which is different from the one appearing in the case of $\mathsf{X}_2(\tau)$, of the $\sigma$-Painlev\'{e} III' equation, depending on the parameter $\nu$, see Section 3 in  \cite{Distinguished}.
\end{rmk}

We apply the results of Section \ref{InterlacingConvSection} and begin with the following:
\begin{lem}\label{InverseLaguerreConsistency}
Let $\beta>0$ and $\nu>-1$. The invere Laguerre measures $\left\{\mathfrak{IL}_{N,\beta}^{(\nu)}\right\}_{N=1}^\infty$ are consistent with parameter $\beta$.
\end{lem}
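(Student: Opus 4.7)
The plan is to verify the consistency identity
\begin{equation*}
\mathfrak{IL}_{N+1,\beta}^{(\nu)} \mathsf{\Lambda}^{(\beta)}_{N+1,N} = \mathfrak{IL}_{N,\beta}^{(\nu)}
\end{equation*}
as a multidimensional integral identity, in exact parallel to the Hua--Pickrell case treated in Lemma \ref{HPconsistency}. After substituting in the explicit density of $\mathfrak{IL}_{N+1,\beta}^{(\nu)}$ and the Dixon--Anderson formula for $\mathsf{\Lambda}^{(\beta)}_{N+1,N}$, the two Vandermonde-type factors combine as $|\Delta(\mathbf{y})|^\beta \cdot |\Delta(\mathbf{y})|^{1-\beta} = |\Delta(\mathbf{y})|$, and the problem reduces to showing that the integral
\begin{equation*}
\int_{\{\mathbf{y} \succ \mathbf{x},\ \mathbf{y}>0\}} |\Delta(\mathbf{y})| \prod_{j=1}^{N+1} y_j^{-\nu - N\beta - 2} e^{-2/y_j} \prod_{i=1}^{N}\prod_{j=1}^{N+1} |x_i - y_j|^{\beta/2 - 1}\, d\mathbf{y}
\end{equation*}
equals, up to the correct ratio of normalisation constants $\mathfrak{l}^{(\nu)}_{N+1,\beta}/\mathfrak{l}^{(\nu)}_{N,\beta}$ and the $\Gamma$-prefactor of the kernel, the density
$|\Delta(\mathbf{x})|^{\beta-1} \prod_i x_i^{-\nu - (N-1)\beta - 2} e^{-2/x_i}$.

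The key point I would emphasise is that the $N$-dependent exponent $-\nu - (N-1)\beta - 2$ in the weight of $\mathfrak{IL}_{N,\beta}^{(\nu)}$ is tuned precisely so that when one passes from level $N+1$ to level $N$, the drop of $\beta$ in the exponent compensates exactly the extra $|\Delta(\mathbf{y})|$ factor produced by the Dixon--Anderson kernel. This is the same structural feature that made the Hua--Pickrell consistency work (where the corresponding exponent was $-\tau - \beta(N-1)/2 - 1$). The required explicit integral evaluation is exactly the Bessel/inverse-Laguerre case in the list of ``classical'' consistent $\beta$-weights worked out in \cite{Neretin_Triangle} (via the Anderson integral) and recast in the interlacing-array language in \cite{assiotis2020boundary}; I would therefore quote those results, mirroring the way \cite[Lemma 2.2]{Neretin_Triangle} is invoked in Lemma \ref{HPconsistency}.

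If a more self-contained route is desired, one can perform the change of variables $y_k \mapsto 2/\tilde{y}_{N+2-k}$, $x_k \mapsto 2/\tilde{x}_{N+1-k}$, which preserves the interlacing relation after reindexing and transforms the Vandermonde as $|\Delta(\mathbf{y})|^\beta = 2^{\beta N(N+1)/2}|\Delta(\tilde{\mathbf{y}})|^\beta \prod_k \tilde{y}_k^{-\beta N}$, with an additional Jacobian $\prod_k 2\tilde{y}_k^{-2}$. After collecting powers of the $\tilde{y}_j$'s contributed by the Jacobian, the Vandermonde, and the interlacing factors $|x_i - y_j|^{\beta/2 - 1}$, the identity is converted into a Dixon--Anderson integral identity against the Laguerre weight $y^\nu e^{-y}$, which can be verified directly. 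The main obstacle in either approach is purely bookkeeping: keeping track of the various $N$-dependent powers and the normalisation constants so that they balance exactly. Conceptually the lemma is the direct analogue of Lemma \ref{HPconsistency} for the Bessel weight.
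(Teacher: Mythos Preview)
Your proposal is correct and follows essentially the same approach as the paper: the paper's proof is a one-line citation to ``Variant A right after Lemma 2.2 of \cite{Neretin_Triangle}'' for the required multidimensional integral, exactly as you anticipated in mirroring the treatment of Lemma~\ref{HPconsistency}. Your additional structural explanation of why the $N$-dependent exponent is tuned correctly, and the alternative self-contained change-of-variables route, go beyond what the paper provides but are consistent with it.
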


\begin{proof}
The required multidimensional integral (\ref{consistency}) that needs to be checked follows from Variant A right after Lemma 2.2 of \cite{Neretin_Triangle}.
\end{proof}

The following formula is an easy consequence of the results of Forrester \cite{forrester2020joint}.
\begin{prop}
Let $\beta>0$, $r \in \mathbb{N}\cup \{0\}$ and $\nu>r-1$. Then, we have
\begin{multline}
\hat{\mathcal{E}}_{N,\beta}^{(\nu)}\left[\left(x_1^{(N)}+\cdots+x_N^{(N)}\right)^r\right] \\ = \frac{r!}{\beta^r} \sum_{|\kappa| = r} \prod_{\Box \in \kappa}\frac{\frac{\beta}{2}\alpha^\prime(\Box) + N - \ell^\prime(\Box)}{\left(\frac{\beta}{2}(\alpha(\Box)+1)+ \ell(\Box)\right)\left(\frac{\beta}{2}\alpha(\Box)+ \ell(\Box) + 1\right)} \prod_{j=0}^{\ell(\kappa) - 1} \frac{\Gamma\left(\nu+ \frac{\beta}{2}j + 1 - \kappa_j\right)}{\Gamma\left(\nu + \frac{\beta}{2}j + 1\right)}. \label{eq:finiteNLaguerremoment}
\end{multline}
\end{prop}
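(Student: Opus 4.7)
The plan is to deduce the identity from Forrester's moment formula \cite{forrester2020joint} for the logarithmic derivative of the Laguerre characteristic polynomial, via the change of variables $y_i=2/x_i$ that, by construction, sends $\mathfrak{IL}_{N,\beta}^{(\nu)}$ onto $\mathfrak{L}_{N,\beta}^{(\nu)}$. This yields
\begin{equation*}
\hat{\mathcal{E}}_{N,\beta}^{(\nu)}\!\left[\bigl(x_1^{(N)}+\cdots+x_N^{(N)}\bigr)^{r}\right] = 2^{r}\,\mathcal{E}_{N,\beta}^{(\nu)}\!\left[\Bigl(\sum_{i=1}^{N} y_i^{-1}\Bigr)^{r}\right] = 2^{r}\, G_{N,\beta}(\nu,r),
\end{equation*}
the last equality because $y_i>0$ almost surely and $\sum_i y_i^{-1} = -\frac{d}{dt}\log P_{\mathbf{y}}(t)|_{t=0}$, so that $|\frac{d}{dt}\log P_{\mathbf{y}}|^{r}=(\sum_i y_i^{-1})^r$ for non-negative integer $r$.

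Second, I would invoke Forrester's explicit evaluation of $G_{N,\beta}(\nu,r)$ from \cite{forrester2020joint}. The method there expands the power sum $p_1(1/\mathbf{y})^{r}=(\sum_i y_i^{-1})^{r}$ in the Jack polynomial basis $\{C_\kappa^{(\beta/2)}(1/\mathbf{y})\}_{|\kappa|=r}$, whose expansion coefficients supply the $r!/\beta^{r}$ prefactor together with the arm/leg denominators $\prod_{\Box}\bigl(\tfrac{\beta}{2}(\alpha(\Box)+1)+\ell(\Box)\bigr)\bigl(\tfrac{\beta}{2}\alpha(\Box)+\ell(\Box)+1\bigr)$ appearing in the statement. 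Each Laguerre average $\mathcal{E}_{N,\beta}^{(\nu)}\bigl[C_\kappa^{(\beta/2)}(1/\mathbf{y})\bigr]$ is then evaluated in closed form by the Jack polynomial analogue of Selberg's integral (Kadell's formula), which produces the co-arm/co-leg numerator $\prod_{\Box}\bigl(\tfrac{\beta}{2}\alpha'(\Box)+N-\ell'(\Box)\bigr)$ and the Gamma ratio $\prod_{j=0}^{\ell(\kappa)-1}\Gamma(\nu+\tfrac{\beta}{2}j+1-\kappa_j)/\Gamma(\nu+\tfrac{\beta}{2}j+1)$ coming from the $y^{\nu}e^{-y}$ weight of $\mathfrak{L}_{N,\beta}^{(\nu)}$.

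Combining these two steps reduces the proof to matching Forrester's output against the right-hand side of \eqref{eq:finiteNLaguerremoment}. This is purely combinatorial bookkeeping: translating between Jack polynomial normalizations (the $C_\kappa$ versus $J_\kappa$ conventions), recognizing the arm/leg and co-arm/co-leg products, and absorbing the $2^{r}$ into the overall constant. No new analytic input or integral computation is needed beyond Forrester's argument and the Kadell--Selberg integral, so the only mild obstacle is the careful tracking of conventions.
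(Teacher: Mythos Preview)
Your change-of-variables step is exactly the paper's final step, and the overall strategy is the same: reduce the inverse-Laguerre moment to a Laguerre negative-power moment and read off the answer from Forrester. The one point where you and the paper diverge is the source of the Laguerre evaluation. You write that you would ``invoke Forrester's explicit evaluation of $G_{N,\beta}(\nu,r)$ from \cite{forrester2020joint}'', but what Forrester actually records (his Proposition~4.1) is the analogous identity for the \emph{Jacobi} $\beta$-ensemble, not the Laguerre one. The paper therefore inserts an intermediate step you omit: it rescales $x_i\mapsto \mu x_i$ in the Jacobi identity and lets $\mu\to\infty$ (so that the weight $(1-x/\mu)^\mu\to e^{-x}$ and the Selberg constant degenerates to $\mathfrak{l}_{N,\beta}^{(\nu)}$), obtaining the Laguerre formula \eqref{eq:forresterLaguerre} first, and only then performs the substitution $x_i\mapsto 2/x_i$.

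Your alternative---expanding $p_1(1/\mathbf{y})^r$ in Jack polynomials and integrating term by term against the Laguerre weight via a Kadell-type integral---would indeed give the Laguerre formula directly and bypass the Jacobi detour, but that computation is not literally in \cite{forrester2020joint}, so it would have to be carried out (or a separate reference supplied) rather than simply cited. In short: your plan is sound and morally identical to the paper's, but the missing Jacobi\,$\to$\,Laguerre limit (or an equivalent direct Laguerre computation) is a genuine, if small, step that still needs to be written down.
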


\begin{proof}
By Proposition 4.1 of \cite{forrester2020joint}, we have the exact evaluation (we have corrected here a small misprint from \cite{forrester2020joint})
\begin{multline}
\frac{1}{S_N(\nu,\mu,\beta)}\int_{[0,1]^N}\left(\sum_{i=1}^N\frac{1}{x_i}\right)^r \prod_{j=1}^Nx_j^\nu(1-x_j)^\mu \left|\Delta(\mathbf{x})\right|^\beta d \mathbf{x} \\
= \left(\frac{2}{\beta}\right)^r r!\sum_{|\kappa| = r} \prod_{\Box \in \kappa}\frac{\frac{\beta}{2} \alpha^\prime(\Box) + N - \ell^\prime(\Box)}{\left(\frac{\beta}{2}(\alpha(\Box)+1)+ \ell(\Box)\right)\left(\frac{\beta}{2}\alpha(\Box)+ \ell(\Box) + 1\right)} \\  \times \prod_{j=0}^{\ell(\kappa) - 1} \frac{\Gamma\left(\nu+ \frac{\beta}{2}j + 1 - \kappa_j\right)}{\Gamma\left(\nu + \frac{\beta}{2}j + 1\right)} \cdot \frac{\Gamma\left(\nu+ \mu + \frac{\beta}{2}(N+j-1) + 2 \right)}{\Gamma\left(\nu + \mu + \frac{\beta}{2}(N+j-1) + 2-\kappa_j \right)},\label{eq:forresterjacobi}
\end{multline}
where $S_N(\nu,\mu,\beta)$ is the Selberg normalisation:
\begin{equation*}
    S_N(\nu,\mu,\beta) = \prod_{j=0}^{N-1}\frac{\Gamma\left(\nu+1+\frac{\beta}{2}j\right)\Gamma\left(\mu+1+\frac{\beta}{2}j\right)\Gamma\left(1+\frac{\beta}{2}(j+1)\right)}{\Gamma\left(\nu+\mu+2+\frac{\beta}{2}(N+j-1)\right)\Gamma\left(1+\frac{\beta}{2}\right)}.
\end{equation*}
Now making the substitution $x_i \mapsto \mu x_i$ in \eqref{eq:forresterjacobi}, gives
\begin{multline}
    \frac{1}{S_N(\nu,\mu,\beta)}\frac{1}{\mu^{N+aN + \beta N(N-1)/2}}\int_{[0,\mu]^N}\left(\sum_{i=1}^N\frac{1}{x_i}\right)^r \prod_{j=1}^Nx_j^\nu\left(1-\frac{x_j}{\mu}\right)^\mu \left|\Delta(\mathbf{x})\right|^\beta d \mathbf{x} \\
=  \left(\frac{2}{\beta}\right)^r r!\sum_{|\kappa| = r} \prod_{\Box \in \kappa}\frac{\frac{\beta}{2} \alpha^\prime(\Box) + N - \ell^\prime(\Box)}{\left(\frac{\beta}{2}(\alpha(\Box)+1)+ \ell(\Box)\right)\left(\frac{\beta}{2}\alpha(\Box)+ \ell(\Box) + 1\right)} \\ \times \prod_{j=0}^{\ell(\kappa) - 1} \frac{\Gamma\left(\nu+ \frac{\beta}{2}j + 1 - \kappa_j\right)}{\Gamma\left(\nu + \frac{\beta}{2}j + 1\right)} \cdot \frac{\Gamma\left(\nu+ \mu + \frac{\beta}{2}(N+j-1) + 2\right)}{\mu^{\kappa_j}\Gamma\left(\nu + \mu + \frac{\beta}{2}(N+j-1) + 2- \kappa_j\right)}. \label{eq:forresterjacobi2}
\end{multline}
Hence, taking the limit $\mu \to \infty$ in \eqref{eq:forresterjacobi2} (using the dominated convergence theorem), yields
\begin{multline}
    \frac{1}{\mathfrak{l}_{N,\beta}^{\nu}}\int_{[0,\infty)^N}\left(\sum_{i=1}^N\frac{1}{x_i}\right)^r \prod_{j=1}^Nx_j^\nu e^{-x_j} \left|\Delta(\mathbf{x})\right|^\beta d \mathbf{x} \\
=  \left(\frac{2}{\beta}\right)^r r!\sum_{|\kappa| = r} \prod_{\Box \in \kappa}\frac{\frac{\beta}{2} \alpha^\prime(\Box) + N - \ell^\prime(\Box)}{\left(\frac{\beta}{2}(\alpha(\Box)+1)+ \ell(\Box)\right)\left(\frac{\beta}{2}\alpha(\Box)+ \ell(\Box) + 1\right)} \times \prod_{j=0}^{\ell(\kappa) - 1} \frac{\Gamma\left(\nu+ \frac{\beta}{2}j + 1 - \kappa_j\right)}{\Gamma\left(\nu + \frac{\beta}{2}j + 1\right)}. \label{eq:forresterLaguerre}
\end{multline}
Thus, by making the substitution $x_i \mapsto 2/x_i$ in \eqref{eq:forresterLaguerre}, we arrive at \eqref{eq:finiteNLaguerremoment}.
\end{proof}

\begin{proof}[Proof of Proposition \ref{LaguerreProp}]
First, observe that    
\begin{align*}
G_{N,\beta}(\nu,r)=\hat{\mathcal{E}}_{N,\beta}^{(\nu)}\left[\left(x_1^{(N)}+\cdots+x_N^{(N)}\right)^r\right],
\end{align*}
since all the points are non-negative. The convergence statement is then a consequence of Proposition \ref{TraceMomentsConv} (recall all points are non-negative) by virtue of Lemma \ref{InverseLaguerreConsistency} and the fact that $\hat{\mathcal{E}}_{N,\beta}^{(\nu)}\left[|x_1^{(1)}|^r\right]<\infty$ for any $0\le r <\nu+1$, which can be chosen so that $r\ge 1$ since $\nu>0$, where we denote by $\mathsf{Y}_\beta(\nu)$ the limiting random variable $\mathsf{T}_\infty$. 

To obtain the explicit formula (\ref{eq:InvLaguerreExplicit}) we substitute the evaluation \eqref{eq:finiteNLaguerremoment} into the right-hand side of \eqref{EvenMomentsFormula} and use the following fact. For any partition $\kappa$ with $|\kappa| = r$, we have
\begin{equation}
    1= \frac{1}{r!} \sum_{k=0}^r (-1)^{r-k}\binom{r}{k}\prod_{\Box \in \kappa} \left(\frac{\beta}{2} \alpha^\prime(\Box) + k - \ell^\prime(\Box) \right).
\end{equation}
This is a special case of the identity $1 = \frac{1}{r!}\sum_{k=0}^r (-1)^{r-k} \binom{r}{k} p(k)$, valid for any monic polynomial $p$ with $\deg p = r$, which can be seen as follows. 
  For a polynomial $p$, we define $(\Delta p)(x) := p(x+1) - p(x)$. Note that $(\Delta^np)(x) = \sum_{k=0}^n (-1)^{n-k} \binom{n}{k} p(x+k)$, and if $p(x)$ is monic with $\deg p = r$, then $(\Delta p)(x) = rx^{r-1} + \text{lower order}$. Hence, $(\Delta^r p)(0) = r!$ and the desired identity follows. Alternatively, the explicit formula \eqref{eq:InvLaguerreExplicit} follows by dividing both sides of \eqref{eq:finiteNLaguerremoment} by $N^r$ and taking $N \to \infty$.
\end{proof}
\bibliographystyle{siam}
\bibliography{references}

\bigskip
\noindent
{\sc School of Mathematics, University of Edinburgh, James Clerk Maxwell Building, Peter Guthrie Tait Rd, Edinburgh EH9 3FD, U.K.}\newline
\href{mailto:theo.assiotis@ed.ac.uk}{\small theo.assiotis@ed.ac.uk}

\bigskip
\noindent
{\sc Mathematical Institute, Andrew Wiles Building, University of Oxford, Radcliffe
Observatory Quarter, Woodstock Road, Oxford, OX2 6GG, UK.}\newline
\href{mailto:mustafa.gunes@st-hildas.ox.ac.uk}{\small mustafa.gunes@st-hildas.ox.ac.uk}

\bigskip
\noindent
{\sc Mathematical Institute, Andrew Wiles Building, University of Oxford, Radcliffe
Observatory Quarter, Woodstock Road, Oxford, OX2 6GG, UK.}\newline
\href{mailto:arun.soor@maths.ox.ac.uk}{\small arun.soor@maths.ox.ac.uk}

\end{document}